\documentclass[12pt, a4paper]{amsart}

\usepackage[hmargin=30mm, vmargin=25mm, includefoot, twoside]{geometry}
\usepackage[bookmarksopen=true]{hyperref}

\usepackage{amsfonts,amssymb,verbatim}

\usepackage{latexsym}
\usepackage{mathrsfs}
\usepackage{stmaryrd}
\usepackage{xspace}
\usepackage{enumerate, paralist}
\usepackage{graphicx}
\usepackage[all]{xy}
\usepackage{extarrows}
\usepackage{tabu}
\usepackage{accents}
\usepackage{tensor}

\usepackage[usenames,dvipsnames]{color}

\usepackage{txfonts, pxfonts}

\usepackage{amsmath}

\usepackage[tbtags]{mathtools}

\usepackage{amsthm, thmtools}

\newtheorem{thm}{Theorem}[section]

 \newtheorem{cor}[thm]{Corollary}
 \newtheorem{lem}[thm]{Lemma}
 \newtheorem{prop}[thm]{Proposition}

\newtheorem{introthm}{Theorem}

\numberwithin{equation}{section}

 \theoremstyle{definition}

 \theoremstyle{remark}

\newtheorem*{claim*}{Claim}

\def\NN{\mathbb{N}}
\def\CC{\mathbb{C}}

\def\B{\mathfrak{B}}
\def\R{\mathfrak{R}}
\def\P{\mathfrak{P}}
\def\M{\mathfrak{M}}

\def\H{\mathcal{H}}

\def\N{\mathcal{N}}

\def\SOT{\mathrm{(SOT)}}
\def\Id{\mathrm{Id}}

\def\ppg{\mathrm{prop}}
\def\ran{\mathrm{Ran}}
\def\ker{\mathrm{Ker}}
\def\dim{\mathrm{dim}}
\def\Ad{\mathrm{Ad}}
\def\rank{\mathrm{rank}}

\def\m{\mathbf{m}}

\begin{document}

\title{Quantitative decomposition and approximation for quasi-local operators}
\author{Jiawen Zhang}
\author{Jingming Zhu}

\address[Jiawen Zhang]{School of Mathematical Sciences, Fudan University, 220 Handan Road, Shanghai, 200433, China.}
\email{jiawenzhang@fudan.edu.cn}

\address[Jingming Zhu]{College of Data Science, Jiaxing University, 899 Guangqiong Road, Jiaxing, 314000, China.}
\email{jingmingzhu@zjxu.edu.cn}

\thanks{Jiawen Zhang was partly supported by the National Key R{\&}D Program of China 2022YFA100700 and NSFC 12422107.}

\thanks{Keywords: Roe and quasi-local algebras, quantitative decomposition and approximation.}

\begin{abstract}
We develop a quantitative approach to quasi-local operators with bounded block-rank. Our main results are a quantitative decomposition for quasi-local operators into block-rank-one pieces and a quantitative finite-propagation approximation in the block-rank-one case, leading to a quantitative proof of the bounded block-rank rigidity result for Roe and quasi-local algebras.
\end{abstract}

\date{\today}

\maketitle

\section{Introduction}\label{sec:intro}

The Roe algebra and the quasi-local algebra are $C^*$-algebras associated to metric spaces, introduced by John Roe in \cite{Roe88}. They are central objects in higher index theory since their $K$-theories contain information about higher indices of elliptic (pseudo-)differential operators on open manifolds (\cite{Roe93, Roe96}). These algebras also play an important role in the study of operator algebras, coarse geometry and even mathematical physics (\emph{e.g.}, \cite{BBFKVW22, BFV20, EM19, MV25, WW20}).

Central questions are when the Roe algebra coincides with the quasi-local algebra, and when the inclusion between them induces an isomorphism on their $K$-theories, as stated in \cite[Question 1.1]{LZZ26}. These questions have drawn considerable attention over the last few years. 

To be more precise, let us first explain the notions. For a discrete metric space $(X,d)$, denote $N_X(R):=\sup_{x\in X} |B(x,R)|$ for $R \geq 0$, where $B(x,R)$ is the closed ball. Say that $X$ has \emph{bounded geometry} if $N_X(R) < \infty$ for each $R \geq 0$. Given a Hilbert space $\H_0$ and $T \in \B(\ell^2(X;\H_0))$, write $T=(T_{x,y})_{x,y\in X}$ for $T_{x,y} \in \B(\H_0)$ given by 
\[
\langle T_{x,y}\xi,\eta \rangle = \langle T(\delta_y \otimes \xi), \delta_x \otimes \eta \rangle \quad \text{for} \quad \xi, \eta \in \H_0. 
\]
\begin{itemize}
 \item Say that $T$ has \emph{finite propagation} if $\ppg(T):=\sup\{d(x,y): T_{x,y} \neq 0\} < \infty$, and $T$ is \emph{locally compact} if $T_{x,y}$ is compact for each $x,y\in X$;
 \item Given $\varepsilon, R>0$, say that $T$ is \emph{$(\varepsilon,R)$-quasi-local} if for any $A,B \subseteq X$ with $d(A,B) > R$, $\|\chi_A T \chi_B\| < \varepsilon$. We say that $T$ is \emph{quasi-local} if for any $\varepsilon>0$, there exists $R>0$ such that $T$ is $(\varepsilon,R)$-quasi-local.
\end{itemize}
The \emph{Roe algebra} $C^*(X;\H_0)$ is the norm closure of all finite propagation and locally compact operators on $\ell^2(X;\H_0)$, and the \emph{quasi-local algebra} $C^*_q(X;\H_0)$ is the set of all quasi-local and locally compact operators on $\ell^2(X;\H_0)$.

In \cite{SZ20}, \v{S}pakula and the first-named author showed that if $X$ has Yu's Property A (introduced in \cite{Yu00} and simplified in \cite{NWZ25, ZZ25}), then the Roe and quasi-local algebras are the same. On the other hand, Ozawa recently showed in \cite{Oza25} that if $X$ contains a sequence of asymptotic expanders (introduced in \cite{LNSZ21}), then these two algebras are not the same. 

Here we focus on coarse disjoint unions of finite metric spaces. Let $\{(X_n,d_n)\}_{n\in \NN}$ be a sequence of finite metric spaces. Their \emph{coarse disjoint union} is a metric space $(X,d)$ where $X = \bigsqcup_{n\in \NN} X_n$ as a set, the restriction of $d$ on each $X_n$ is $d_n$ and $d$ satisfies
\[
d(X_n, X_m) \to \infty \quad \text{as} \quad n+m \to \infty \quad \text{and} \quad n \neq m.
\]
Clearly, $X$ has bounded geometry if and only if $\{(X_n,d_n)\}_n$ has \emph{uniformly bounded geometry} (\emph{i.e.}, $\sup_n N_{X_n}(R) < \infty$ for each $R \geq 0$). An operator $T\in \B(\ell^2(X;\H_0))$ is called \emph{block-diagonal} if $T$ has the form $T=\SOT$-$\sum_{n\in \NN} T_n$ for $T_n \in \B(\ell^2(X_n;\H_0))$. Given $M \in \NN$, such $T$ is said to have \emph{block-rank at most $M$} if each $T_n$ has finite rank at most $M$, and \emph{block-rank $M$} if each $T_n$ has rank exactly $M$. 

To attack the $K$-theory question, the first-named author and his collaborators have studied block-rank-one projections $P=\mathrm{(SOT)}$-$\sum_{n} P_n$ on sparse spaces in \cite{KLVZ21, LSZ23}, and showed that $P$ is quasi-local if and only if it is in the Roe algebra. Recently, this result has been generalised to the higher block-rank case:

\begin{introthm}[{\cite[Theorem A]{LZZ26}}]\label{thm:Thm A in LZZ26}
 Let $\{(X_n,d_n)\}_{n\in \NN}$ be a sequence of finite metric spaces of uniformly bounded geometry, and $X:=\bigsqcup_{n\in \NN} X_n$ be their coarse disjoint union. Let $M \in \NN$ and $\H_0$ be a Hilbert space. For $T=\SOT$-$\sum_{n\in \NN} T_n$ in $\B(\ell^2(X;\H_0))$ with block-rank at most $M$, we have $T \in C^*(X;\H_0)$ \emph{if and only if} $T \in C^*_q(X;\H_0)$.
\end{introthm}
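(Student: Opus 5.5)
One direction is standard. Finite propagation operators are quasi-local. The set of $(\varepsilon,R)$-quasi-local operators is closed under norm limits, up to a small loss in $\varepsilon$. Local compactness is also preserved under norm limits. Hence $C^*(X;\H_0)\subseteq C^*_q(X;\H_0)$, and the content of the statement is the converse. Let $T=\SOT$-$\sum_n T_n$ be quasi-local with $\rank T_n\le M$. Each $T_n$ acts on $\ell^2(X_n;\H_0)$, and $X_n$ is finite. Hence $T_n$ is finite rank, and so $T$ is automatically locally compact. It therefore suffices to show that for every $\varepsilon>0$ there is a finite propagation operator $S$, which we may take block-diagonal, with $\|T-S\|\le\varepsilon$. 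Each block $S_n$ will have finite rank, so $S$ is locally compact and lies in the Roe algebra. Since $d(X_n,X_m)\to\infty$, finitely many blocks can be approximated trivially (they have finite propagation already, as the $X_n$ are finite). So only uniform estimates for large $n$ matter.

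\textbf{Step 1 (decomposition into block-rank-one pieces).} Write $T_n=\sum_{i=1}^{M}\xi^{(i)}_n\otimes\overline{\eta^{(i)}_n}$ using a singular value decomposition. The difficulty is that the individual rank-one summands need not be quasi-local even though their sum is. For instance, if two singular values are close, the singular vectors can mix distant regions. I would instead do a quantitative regrouping. Given $\varepsilon$, partition the singular values of each $T_n$ into clusters separated by gaps of size at least some $\delta=\delta(\varepsilon,M)$. Singular values below $\varepsilon/M$ are discarded, at cost at most $\varepsilon$ in norm. Singular values within a cluster are replaced by a common value, at cost at most $M\delta$. The resulting spectral pieces are functional calculus of $T_n^*T_n$ and $T_nT_n^*$ by functions supported near isolated clusters. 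Quasi-locality is preserved by polynomial functional calculus with controlled constants, hence by continuous functional calculus (with constants depending on $\delta,M$). So each spectral piece $P_n^{(j)}$, of rank at most $M$, is uniformly quasi-local. Within a piece, I would then use local-to-global arguments to split the range further. Using bounded geometry, find for each unit vector in the range a ball of radius $R$ carrying most of its mass. Then pick vectors in the range concentrated on well-separated regions, giving a near-orthogonal decomposition into block-rank-one quasi-local pieces $\zeta\otimes\overline{\zeta'}$.

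\textbf{Step 2 (rank-one approximation).} For a block-rank-one quasi-local operator $\sum_n \xi_n\otimes\overline{\eta_n}$ with unit vectors, quasi-locality forces each $\xi_n$ and each $\eta_n$ to concentrate: there is $R$ such that $\|\chi_{B(x_n,R)}\xi_n\|\ge 1-\varepsilon$ for some $x_n$, and similarly for $\eta_n$ with centres $y_n$, where $d(x_n,y_n)\le R'$. Otherwise splitting the mass into two far-apart sets contradicts $(\varepsilon,R)$-quasi-locality, via a pigeonhole on annuli that uses $N_X(R)$. Truncating gives $\chi_{B(x_n,R)}\xi_n\otimes\overline{\chi_{B(y_n,R)}\eta_n}$, of propagation at most $2R+R'$ and norm distance $O(\varepsilon)$. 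This is the uniform finite-propagation approximation.

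The main obstacle is Step 1. The mass-concentration claim in Step 2 is false for general vectors; think of spread-out vectors like those in an expander. What makes the argument work is that $\xi_n\otimes\overline{\xi_n}$ being quasi-local forces concentration. For higher rank, showing that a quasi-local rank-$M$ projection-like block splits into at most $M$ quasi-local rank-one pieces with constants uniform in $n$ is where the quantitative estimates, depending only on $M$, $\varepsilon$ and $N_X$, must be worked out carefully.
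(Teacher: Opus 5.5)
There is a genuine gap, and it is the concentration claim that both of your steps rely on. You assert that quasi-locality of $\xi_n\otimes\overline{\xi_n}$ forces $\xi_n$ to carry most of its mass on a ball whose radius is independent of $n$. This is false, and the expanders you mention are exactly the counterexample. Let $\{X_n\}_n$ be $k$-regular expander graphs with $|X_n|\to\infty$, and put $\xi_n:=|X_n|^{-1/2}\chi_{X_n}$. By the uniform spectral gap, $P:=\SOT$-$\sum_n\xi_n\otimes\overline{\xi_n}$ is a norm limit of polynomials in the graph Laplacian. So $P$ lies in $C^*(X)$ and is a quasi-local block-rank-one projection. Yet $\|\chi_{B(x,R)}\xi_n\|^2\le N_X(R)/|X_n|\to0$ for every fixed $R$. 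Your truncations therefore satisfy $\|S_n\|\to0$, and $\|P-S\|\ge\limsup_n\|\xi_n\otimes\overline{\xi_n}-S_n\|=1$.

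What quasi-locality of $P_u$ actually gives is $\mu(A)\mu(B)<\varepsilon^2$ for $R$-separated $A,B$, where $\mu=|u|^2$. This is an \emph{expansion} property, not a concentration property: sets of $\mu$-mass between $4\varepsilon^2$ and $\frac12$ have $R$-boundary of proportional mass (Lemmas~\ref{lem:app MAE 1}--\ref{lem:app MAE 3 domain}). So your annulus argument stops after $L(\varepsilon)$ steps only if it starts from a set of mass of order $\varepsilon^2$. Starting from a single ball, it can need of order $\log|X_n|$ steps.

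The missing idea is how to approximate a \emph{spread-out} quasi-local rank-one projection by finite-propagation operators. The paper, following \cite{LSZ23}, does this as follows. It passes to a large subset $Y$ on which $\mu$ genuinely expands. From partial translations it builds a reversible Markov operator $\mathfrak{P}$ on $Y$ with propagation at most $L(\varepsilon)R$, and gets a spectral gap from the Cheeger inequality. It then approximates $P_u$ by a rescaled power $\bigl(\frac12\chi_Y+\frac12\mathfrak{P}\bigr)^K$, with $K$ depending only on $\varepsilon$ and $N_X(L(\varepsilon)R)$. This approximant is itself spread out, not a truncation. General rank-one contractions reduce to projections by the $2\times2$ matrix trick in Theorem~\ref{thm:quantitative version of rank one general case}.

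The same concentration claim is what you use in Step 1 to split a quasi-local spectral piece of rank up to $M$ into quasi-local rank-one pieces, so that step fails too. For example, let a block contain two expanders $E_n,E'_n$ with $d(E_n,E'_n)\to\infty$, and let $\xi_n,\xi'_n$ be their normalised constant vectors. The projection onto $\mathrm{span}\{\xi_n,\xi'_n\}$ is quasi-local, and the good basis $\{\xi_n,\xi'_n\}$ is not concentrated on bounded balls. A rotated vector $v_n:=(\xi_n+\xi'_n)/\sqrt2$ has $\|\chi_{E_n}P_{v_n}\chi_{E'_n}\|=\frac12$, so a wrong basis choice destroys quasi-locality.

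Your spectral-cutting part of Step 1 is essentially sound and close to the paper. The paper cuts the finite spectrum of $T^*\chi_YT$ at a gap and controls the resulting spectral projection with a Newton interpolation polynomial (Theorem~\ref{thm:quantitative decomposition general case}). What you are missing is a concentration-free way to choose the basis.

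The paper's tool is Proposition~\ref{prop:splitting}. If $\|\chi_AP_U\chi_B\|=\varepsilon$, the spectral projections of the compression $\m(A)=W^*\chi_AW\in\M_M(\CC)$ split $U=U_A\oplus U_B$ with $\|\chi_AP_{U_A}\|,\|\chi_BP_{U_B}\|\le\sqrt\varepsilon$. Combine this with the inductive hypothesis on the smaller space $U_B$ and Lemma~\ref{lem:large portion of norm}. That yields one unit vector $u^{(1)}\in U$ whose rank-one projection is quasi-local with controlled constants. A further induction on $U\ominus\CC u^{(1)}$ (Theorem~\ref{thm:quantitative decomposition}) then gives the whole basis, with constants depending only on $M$.
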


The key ingredient in the proof of Theorem~\ref{thm:Thm A in LZZ26} is a decomposition result \cite[Theorem B]{LZZ26}, showing that such $T$ can be decomposed as a sum of $M$ quasi-local block-diagonal operators with block-rank at most one. Based on these works, \cite[Theorem D]{LZZ26} provides the first negative result on their $K$-theories.

In this paper, we provide a quantitative proof of Theorem~\ref{thm:Thm A in LZZ26}. First, we establish the following quantitative decomposition result in Section~\ref{sec:decomposition} (see Theorem~\ref{thm:quantitative decomposition general case} for a more precise statement):

\begin{introthm}\label{thm:quantitative decomposition intro}
Let $\{(X_n,d_n)\}_{n\in\NN}$ be a sequence of finite metric spaces and let $X=\bigsqcup_n X_n$ be a coarse disjoint union. Let $\H_0$ be a Hilbert space and $M \in \NN$. Then there exist $s_M, \alpha_M \in (0,1)$ and $A_M, B_M \geq 1$ such that for any $(\varepsilon,R)$-quasi-local block-diagonal contraction $T \in \B(\ell^2(X;\H_0))$ with block-rank at most $M$ and $\varepsilon\in (0,s_M)$, there exists a decomposition $T=T^{(1)}+\cdots+T^{(M)}$, where each $T^{(j)}$ is a block-diagonal contraction with block-rank at most one and is $(A_M\varepsilon^{\alpha_M},B_MR)$-quasi-local.
\end{introthm}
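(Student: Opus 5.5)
We argue block by block: all constants must be independent of $n$, and quasi-locality of a block-diagonal operator is essentially a blockwise condition once $R$ is smaller than the gaps $d(X_n,X_m)$; for the finitely many small $n$ a coarse estimate suffices. So fix one block $T_n$ of rank at most $M$, write $T_n=\sum_{k}\sigma_k\,\xi_k\otimes\eta_k^*$ (its singular value decomposition, $1\ge\sigma_1\ge\cdots\ge\sigma_M\ge0$), and induct on $M$. The case $M=1$ is trivial. For the inductive step we split off one block-rank-one piece $S_n$ such that $S_n$ is $(C\varepsilon^{\beta},BR)$-quasi-local and $T_n-S_n$ has rank at most $M-1$. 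The inductive hypothesis then applies to $T_n-S_n$, with tolerance $C'\varepsilon^{\beta}$ and radius $BR$, and the exponents and radius constants multiply, giving $\alpha_M=\prod\beta$ and $B_M=\prod B$. We impose $\varepsilon<s_M$ so that the new tolerance stays below $s_{M-1}$. Contractivity can be restored at the end by a mild rescaling, or arranged by using spectral projections.

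To find $S_n$ we exploit the spectral gaps of $|T_n|$. By pigeonhole over the $M$ singular values, there is an index $k$ such that $\sigma_k-\sigma_{k+1}\ge\varepsilon^{\gamma}$ for a suitable $\gamma$, while the spread within each cluster is at most $M\varepsilon^{\gamma}$. If the top cluster contains a single singular value, we take $S_n=\sigma_1\xi_1\otimes\eta_1^*$. This equals $T_n$ times the spectral projection $E=f(T_n^*T_n)$, where $f$ is a continuous function separating $\sigma_1^2$ from the rest. Approximate $f$ by a polynomial $p$ of degree roughly $\varepsilon^{-\gamma}\log(1/\varepsilon)$, with error $\varepsilon$, using the gap. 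Now $(\varepsilon,R)$-quasi-locality gives $(m\varepsilon,mR)$-quasi-locality of products of $m$ factors. This only gives radius $mR$ with $m$ depending on $\varepsilon$, which is not a linear $B_MR$.

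The key step, and the main obstacle, is therefore obtaining the radius $B_MR$ with $B_M$ independent of $\varepsilon$. To get it, we will instead use finite rank directly. For $d(A,B)>BR$, compare $\chi_AS_n\chi_B$ with $\chi_AT_n\chi_B$. The vectors $\xi_k$ are pairwise orthogonal, and $\|\chi_AT_n\chi_B\|<\varepsilon$. The plan is to show that each rank-one piece localizes: for a fixed vector $\eta_k$, the quantities $\|\chi_B\eta_k\|$ and $\|\chi_A\xi_k\|$ cannot both be large when $A$ and $B$ are far apart. To see this, write $\chi_A T_n\chi_B=\sum_k\sigma_k(\chi_A\xi_k)\otimes(\chi_B\eta_k)^*$. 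This is an $M$-term sum, so the Gram matrices $G_A=(\langle\chi_A\xi_k,\chi_A\xi_l\rangle)$ and $G_B$ control its norm from below: $\|\chi_AT\chi_B\|^2\ge c_M\,\lambda_{\min}$-type quantities built from $G_A^{1/2}\Sigma G_B^{1/2}$. From this we extract a bound $\sigma_k\|\chi_A\xi_k\|\|\chi_B\eta_k\|\lesssim\varepsilon^{\beta}$. Cross terms are the difficulty, and they are handled by the gap: the components within the separated cluster are determined up to $O(\varepsilon^{1-\gamma})$ by Davis--Kahan type perturbation bounds. If the top cluster has several singular values, we pass to the next gap, splitting $T_n=T_nE+T_n(1-E)$ into two pieces of smaller rank, each quasi-local by the above, and recurse. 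We expect the principal difficulty to be making this Gram-matrix/perturbation estimate uniform, with an exponent $\beta$ depending only on $M$.
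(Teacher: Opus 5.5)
Your proposal has a genuine gap, and it is at the hardest point of the theorem: choosing a good basis \emph{inside} a cluster of equal or nearly equal singular values. Your recursion only splits $T_n$ along gaps in its singular values, and when the top cluster is everything there is nothing to split. Take $T_n=P_U$, a rank-$M$ projection. The only gap is between $\sigma_M=1$ and $0$, so $T_nE=T_n$, $T_n(1-E)=0$, and the recursion stalls.

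In this situation the singular vectors carry no geometric information. The localization bound you want, $\sigma_k\|\chi_A\xi_k\|\,\|\chi_B\eta_k\|\lesssim\varepsilon^{\beta}$, is false for a general choice of singular vectors. Take $\H_0=\CC$ and $U=\mathrm{span}\{\delta_a,\delta_b\}$ with $d(a,b)$ large inside one block. Then $P_U$ has propagation $0$, yet $\xi_1=\eta_1=(\delta_a+\delta_b)/\sqrt2$ is a legitimate singular vector with $\|\chi_{\{a\}}\xi_1\|\,\|\chi_{\{b\}}\xi_1\|=\frac12$. Davis--Kahan cannot fix this, because inside a degenerate cluster the operator does not determine the eigenvectors.

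What is missing is a geometric choice of basis. This is the paper's Theorem~\ref{thm:quantitative decomposition}: if $\chi_YP_U$ is $(\varepsilon,R)$-quasi-local, then $U$ has an orthonormal basis whose rank-one projections are $(C_M\varepsilon^{\beta_M},D_MR)$-quasi-local. Its proof works as follows.
\begin{itemize}
\item Take a unit vector that is not localized, witnessed by sets $A,B$ far apart.
\item Split $U=U_A\oplus U_B$ using the spectral projections of $\m(A)=W^*\chi_AW$ (Proposition~\ref{prop:splitting}), so that $\|\chi_AP_{U_A}\|,\|\chi_BP_{U_B}\|\le\sqrt\varepsilon$.
\item Apply induction on dimension to $\chi_AP_{U_B}$, then Lemma~\ref{lem:large portion of norm}, to produce one well-localized unit vector.
\item Split that vector off and induct again.
\end{itemize}

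The obstacle you single out, radius growing with $\varepsilon$, has a simple fix that you overlooked. The operator $T_n^*T_n$ has at most $M+1$ spectral points. So a step function $f$ across a gap $[a,b]$ can be \emph{interpolated exactly} on the spectrum by a polynomial of degree at most $M$. Divided differences bound its coefficients by $O_M((b-a)^{-M})$. Choosing the gap inside $[\varepsilon^{2\alpha_M},2\varepsilon^{2\alpha_M}]$ makes the spectral projection $P$ $(K_M\varepsilon^{1-2M\alpha_M},2MR)$-quasi-local, with radius linear in $R$.

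With these two ingredients the paper needs no singular value decomposition or clustering at all. One gap separates the negligible singular values from the rest. Theorem~\ref{thm:quantitative decomposition} gives a well-localized orthonormal basis of $\ran P$, which is completed to a basis $\{u^{(i)}\}$ of $(\ker T_n)^\perp$. The pieces are $T_nP_{u^{(i)}}$. Each is quasi-local by Lemma~\ref{lem:composition of quasi-lcoal}, or has norm at most $\sqrt2\,\varepsilon^{\alpha_M}$ when $u^{(i)}\in\ran(1-P)$. These pieces are automatically contractions and sum to $T_n$. Rescaling, as you suggest, would break $T=\sum_jT^{(j)}$.

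Your remark about $R$ versus the gaps $d(X_n,X_m)$ is unnecessary. For block-diagonal $T$ one has $\|\chi_AT\chi_B\|=\sup_n\|\chi_AT_n\chi_B\|$ exactly.
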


Theorem~\ref{thm:quantitative decomposition intro} differs from \cite[Theorem B]{LZZ26}, where \(T\) is assumed to be quasi-local and each \(T^{(j)}\) can be chosen to be quasi-local. In the current setting, \(T\) need not be quasi-local and instead, we assume only a single quantitative quasi-locality estimate and obtain quantitative estimates for the summands. Hence, neither of these two results contains the other. To obtain the required quantitative estimates, we need a different approach rather than simply tracking the parameters in the proof of \cite[Theorem B]{LZZ26}.

Based on Theorem~\ref{thm:quantitative decomposition intro}, it remains to approximate $(\varepsilon',R')$-quasi-local block-rank-one operators by finite propagation operators. We prove the following in Section~\ref{sec: rank one} (see Theorem~\ref{thm:quantitative version of rank one general case} for a precise statement).

\begin{introthm}\label{thm:quantitative version of rank one intro}
Let $\{(X_n,d_n)\}_{n\in\NN}$ be a sequence of finite metric spaces with uniformly bounded geometry and let $X=\bigsqcup_n X_n$ be a coarse disjoint union. Let $\H_0$ be a Hilbert space, and $T=(\mathrm{SOT})\text{-}\sum_n T_n$ be a block-rank-one contraction in $\B(\ell^2(X;\H_0))$ which is $(\varepsilon,R)$-quasi-local for some $\varepsilon \in (0,\frac{1}{30})$ and $R>0$. Then there exists a locally compact operator $S \in \B(\ell^2(X;\H_0))$ with propagation at most $R'$, where $R'$ depends only on $\varepsilon,R$ and the uniformly bounded geometry of $\{X_n\}_n$, such that $\|T-S\| \leq 30\varepsilon$.
\end{introthm}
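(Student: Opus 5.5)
The plan is to treat each block separately and then sum. Each $T_n$ has rank at most one, so we write $T_n = t_n\, \xi_n \otimes \eta_n^*$ with unit vectors $\xi_n,\eta_n \in \ell^2(X_n;\H_0)$ and $0 \le t_n \le 1$. The basic identity is
\[
\|\chi_A T_n \chi_B\| = t_n \|\chi_A \xi_n\|\,\|\chi_B \eta_n\|,
\]
so $(\varepsilon,R)$-quasi-locality says exactly that $t_n\|\chi_A\xi_n\|\|\chi_B\eta_n\| < \varepsilon$ whenever $A,B\subseteq X_n$ satisfy $d(A,B)>R$. Since $T$ is block-diagonal, it suffices to produce, for each $n$, an operator $S_n$ on $\ell^2(X_n;\H_0)$ with $\|T_n - S_n\|\le 30\varepsilon$ and propagation at most $R'$, with $R'$ independent of $n$. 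Then $S=\SOT$-$\sum_n S_n$ works, provided $R'$ is smaller than $d(X_n,X_m)$ for all but finitely many pairs; the finitely many remaining blocks cause no harm, since $S_n$ lives in $\ell^2(X_n;\H_0)$ anyway. Local compactness of $S$ is automatic if each $S_n$ is a finite sum of operators of the form $\chi_C T_n \chi_D$, because $X_n$ is finite and $T_n$ has finite rank. Blocks with $t_n\le 30\varepsilon$ are handled by $S_n=0$, so we assume $t_n > 30\varepsilon$ from now on.

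For the remaining blocks, first run a layering argument. Fix a base point $x\in X_n$, consider the balls $B_k = B(x,k(R+1))$, and set $p_k = \|\chi_{B_k}\xi_n\|^2$ and $m_k=\|\chi_{B_k}\eta_n\|^2$. Applying quasi-locality to the pairs $(B_{k-1},X_n\setminus B_k)$ gives
\[
t_n^2\,p_{k-1}(1-m_k)<\varepsilon^2 \quad\text{and}\quad t_n^2\, m_{k-1}(1-p_k)<\varepsilon^2.
\]
Hence once either mass crosses a threshold $\delta$, the other is within $\varepsilon^2/(t_n^2\delta)$ of $1$ one step later. By pigeonholing over $K$ consecutive shells, one shell carries at most $1/K$ of the mass. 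From this we want to cut $X_n$ into pieces along low-mass shells and compare $T_n$ with $\sum_i \chi_{N_R(U_i)} T_n \chi_{U_i}$, where $\{U_i\}$ is a partition of $X_n$ into sets of diameter at most $D$. This comparison operator has propagation at most $D+R$.

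To control the error $E_n=\sum_i \chi_{X_n\setminus N_R(U_i)}\,T_n\,\chi_{U_i}$, we choose the partition using bounded geometry, for example via a colouring of a $D$-net into $N_X(3D)$ colour classes. Within one colour class the pieces are mutually far apart, so their contributions can be grouped into sets $A,B$ with $d(A,B)>R$. Quasi-locality applied to these grouped sets then bounds each colour class's contribution by a constant multiple of $\varepsilon$. The point of grouping is to avoid the naive estimate $\varepsilon\sqrt{\#\{U_i\}}$, which comes from bounding the pieces one at a time. Finally, we must optimise $D$ and the number of colours so that the total constant is $30$, with $R'=D+R$ depending only on $\varepsilon$, $R$ and the function $N_X$.

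The main obstacle is the step bounding $\|E_n\|$ uniformly in $n$. Mass need not concentrate in a bounded ball: the constant vector on an expander is quasi-local without being localized. So the argument cannot rely on a concentration lemma, and must instead exploit the rank-one structure to turn a sum over many far-apart pieces into a single quasi-locality estimate. What I would try first is to estimate $\|E_n v\|$ by writing
\[
E_n v = \sum_i \langle v, \chi_{U_i}\eta_n\rangle\, t_n\,\chi_{X_n\setminus N_R(U_i)}\xi_n.
\]
I would then group the indices $i$ by colour and by dyadic size of $\|\chi_{U_i}\eta_n\|$, so that within each group the sets $\bigcup U_i$ and $\bigcap_i (X_n\setminus N_R(U_i))$ can be fed into quasi-locality together. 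The remaining task is to show that the number of groups, and the losses from the dyadic decomposition, are absorbed into the constant $30$.
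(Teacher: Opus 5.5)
There is a genuine gap, and it is exactly at the step you flag as the main obstacle. The approximant $S_n=\sum_i\chi_{\N_R(U_i)}T_n\chi_{U_i}$ cannot be within $O(\varepsilon)$ of $T_n$ in general, however the partition, colouring or dyadic grouping is chosen. Test it on your own example. Let $\{X_n\}$ be an expander family with $|X_n|\to\infty$, and let $T_n=P_{u_n}$ with $u_n$ the normalised constant vector; this is $(\varepsilon,R)$-quasi-local with $R=R(\varepsilon)$ independent of $n$. Every matrix entry of $T_n$ equals $1/|X_n|$. Your $S_n$ keeps only the entries $(x,y)$ with $x\in\N_R(U_{i(y)})$, where $y\in U_{i(y)}$, and all of these satisfy $d(x,y)\le D+R$. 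By the Schur test $\|S_n\|\le N_X(D+R)/|X_n|$, so $\|E_n\|=\|T_n-S_n\|\ge 1-N_X(D+R)/|X_n|\to 1$. The same bound rules out any operator obtained by cutting $T_n$ down to propagation $R'$.

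Your grouping heuristic fails because the error of one colour class is $\sum_i\chi_{X_n\setminus\N_R(U_i)}T_n\chi_{U_i}$, not $\chi_{X_n\setminus\N_R(B)}T_n\chi_B$ with $B=\bigcup_iU_i$. The difference comes from points near \emph{other} pieces of the same colour. In this example essentially all the mass sits there, and quasi-locality gives no control over it. The shell/layering step does not help either: on expanders the mass of balls grows exponentially and never localises at a bounded scale.

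The missing idea is that the finite-propagation approximant must have entries genuinely different from those of $T_n$; in the example, its entries cannot all be $O(1/|X_n|)$. Producing such an operator requires the expansion hidden in quasi-locality. The paper first reduces to rank-one projections:
\begin{itemize}
\item for a partial isometry $T$ it uses the rank-one projection $G=\frac12\begin{pmatrix}T^*T&T^*\\T&TT^*\end{pmatrix}$;
\item in general it rescales by $\lambda=\|T\|$, taking $S=0$ when $\lambda\le 30\varepsilon$;
\item it passes to $\H_0=\CC$ by uniformisation, which also makes the approximant have rank locally at most one and hence be locally compact.
\end{itemize}
For $P_u$ it converts quasi-locality into an expansion inequality for $\mu=|u|^2$ on a subset $Y$ of measure close to $1$. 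It then builds a reversible Markov kernel on $Y$ with Cheeger constant at least $\varepsilon^2/(4N^2)$. The resulting spectral gap shows that a rescaled power $\left(\frac12\chi_Y+\frac12\P\right)^K$, of propagation at most $KLR$, is within $4\varepsilon$ of $P_u$. Your argument needs some construction of this kind, producing a new operator rather than a truncation of $T_n$.
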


When each $T_n$ is a projection, this is a quantitative version of \cite[Theorem 6.6]{LSZ23} by parameter tracing. However, the original proof of \cite[Theorem 6.6]{LSZ23} is technical and hence, for the convenience of the reader, we provide a self-contained proof and compute the parameters in Section~\ref{sec:proof of complex case rank one}.

Finally, combining Theorem~\ref{thm:quantitative decomposition intro} and Theorem~\ref{thm:quantitative version of rank one intro}, we recover Theorem~\ref{thm:Thm A in LZZ26}.

\subsection*{Acknowledgements}
We would like to thank Kang Li for helpful discussions.

\subsection*{Notation} Given a metric space $(X,d)$, $A \subseteq X$ and $R \geq 0$, denote its \emph{$R$-neighbourhood} by $\N_R(A):=\{x\in X: d(x,A) \leq R\}$, and its \emph{$R$-boundary} by $\partial_R(A):=\N_R(A) \setminus A$. Moreover, for $A \subseteq Y \subseteq X$, denote the relative boundary by $\partial^Y_R(A):=\partial_R(A) \cap Y$. For $E \subseteq X \times X$, denote $E^{-1}:=\{(y,x) \in X \times X: (x,y) \in E\}$.

A subset $K \subseteq X \times X$ is called a \emph{partial translation} if there exists $R>0$ such that $K \subseteq \{(x,y) \in X \times X: d(x,y) \leq R\}$ and for each $x\in X$, there is at most one $y\in X$ with $(x,y) \in K$ and at most one $z\in X$ with $(z,x) \in K$. 

\section{Quantitative decomposition}\label{sec:decomposition}

Let $(X,d)$ be a discrete metric space, $\H_0$ be a Hilbert space and $u^{(1)}, \cdots, u^{(M)}$ be a set of orthonormal vectors  in $\ell^2(X;\H_0)$. Denote by $U$ the subspace in $\ell^2(X;\H_0)$ spanned by $\{u^{(1)}, \cdots, u^{(M)}\}$, and $P_U$ the orthogonal projection onto $U$. For simplicity, write $u_A:=\chi_A u$ for $u\in \ell^2(X;\H_0)$ and $A \subseteq X$. 

Set $W: \CC^M \longrightarrow \ell^2(X;\H_0)$ by $W(e_i):=u^{(i)}$ for $i=1,2,\cdots, M$. Here \(e_i\) denotes the \(i\)-th standard basis vector of \(\CC^M\). Clearly, 
\[
W^*W = \Id_{\CC^M} \quad \text{and} \quad WW^* = P_U.
\]
For $A \subseteq X$, denote 
\[
\m(A):=W^* \chi_A W \in \M_M(\CC),
\]
where $\M_M(\CC)$ denotes the $M$-by-$M$ complex matrix algebra. We have:
\begin{equation}\label{EQ:u and m}
\|u_A\|^2 = \langle \m(A) \xi, \xi \rangle = \|\m(A)^{1/2} \xi\|^2  \quad \text{for} \quad u=W \xi.
\end{equation}

Moreover, we recall the following from \cite[Lemma 3.1]{LZZ26}.

\begin{lem}\label{lem:norm of APB}
 With the same notation as above, we have:
 \[
   \|\chi_A P_U \chi_B\| = \left\|\m(A)^{1/2} \m(B)^{1/2}\right\| \quad \text{for any} \quad A,B \subseteq X.
 \]
\end{lem}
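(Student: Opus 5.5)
The plan is to factor $\chi_A P_U \chi_B$ through $W$ and then use the $C^*$-identity twice. Since $P_U = WW^*$, we have $\chi_A P_U \chi_B = (\chi_A W)(W^*\chi_B)$. The first step is to record the norm of each factor separately. Because $\chi_A$ is a projection,
\[
(\chi_A W)^*(\chi_A W) = W^*\chi_A W = \m(A),
\]
and so $\chi_A W$ has polar decomposition $\chi_A W = V_A\, \m(A)^{1/2}$, where $V_A\colon \CC^M \to \ell^2(X;\H_0)$ is a partial isometry. Its initial space is the closure of the range of $\m(A)^{1/2}$. In the same way, $W^*\chi_B = (\chi_B W)^* = \m(B)^{1/2} V_B^*$.

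Substituting these factorisations gives
\[
\chi_A P_U \chi_B = V_A\, \m(A)^{1/2}\m(B)^{1/2}\, V_B^*.
\]
This immediately yields the upper bound $\|\chi_A P_U\chi_B\| \le \|\m(A)^{1/2}\m(B)^{1/2}\|$, since $V_A$ and $V_B^*$ are contractions.

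For the reverse inequality, note that $V_A^*V_A$ is the projection onto the range of $\m(A)^{1/2}$, and therefore $V_A^*V_A\,\m(A)^{1/2} = \m(A)^{1/2}$. Similarly, $\m(B)^{1/2} V_B^* V_B = \m(B)^{1/2}$. Hence
\[
\m(A)^{1/2}\m(B)^{1/2} = V_A^* \bigl(\chi_A P_U \chi_B\bigr) V_B,
\]
which gives $\|\m(A)^{1/2}\m(B)^{1/2}\| \le \|\chi_A P_U\chi_B\|$. The two inequalities together prove the lemma.

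As an alternative that avoids polar decomposition, one can compute directly:
\[
\|\chi_A P_U\chi_B\|^2 = \|(\chi_B W)(W^*\chi_A W)(W^*\chi_B)\| = \|\chi_B W\, \m(A)\, W^*\chi_B\|.
\]
Writing $Y = \m(A)^{1/2}W^*\chi_B$, this equals $\|Y^*Y\| = \|YY^*\| = \|\m(A)^{1/2}\m(B)\m(A)^{1/2}\|$, which is $\|\m(A)^{1/2}\m(B)^{1/2}\|^2$ by the $C^*$-identity once more.

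The only point that needs care is justifying the identities for the partial isometries, or equivalently applying $\|Y^*Y\|=\|YY^*\|$ to the correct operator. I would present the second, purely algebraic argument, since it is shorter and avoids support-projection bookkeeping. This lemma is in any case quoted from \cite[Lemma 3.1]{LZZ26}.
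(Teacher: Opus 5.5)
Your proposal is correct, and both routes check out. In the polar-decomposition version, $V_A^*V_A$ is the range projection of $\m(A)^{1/2}$, and this gives the two-sided intertwining. In the algebraic version, with $Y=\m(A)^{1/2}W^*\chi_B$, you correctly get $Y^*Y=\chi_B W\m(A)W^*\chi_B$ and $YY^*=\m(A)^{1/2}\m(B)\m(A)^{1/2}$. That last operator is $ZZ^*$ for $Z=\m(A)^{1/2}\m(B)^{1/2}$, so the final $C^*$-identity step is valid.

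There is nothing in the paper to compare against. The paper gives no proof of this lemma and simply imports it from \cite[Lemma 3.1]{LZZ26}, so your argument supplies a self-contained verification where the paper relies on the citation.

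On the choice between your two routes: the algebraic one is the cleaner one to present, as you say. The polar-decomposition one gives slightly more: $\chi_A P_U\chi_B$ and $\m(A)^{1/2}\m(B)^{1/2}$ are intertwined by partial isometries in both directions, so they share their nonzero singular values. The paper only needs the norm equality, in Proposition~\ref{prop:splitting}, where it is applied to $\m(B)^{1/2}\m(A)^{1/2}$. That is just the adjoint of $\m(A)^{1/2}\m(B)^{1/2}$ and has the same norm.
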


We need the following two auxiliary results.

\begin{prop}\label{prop:splitting}
With the same notation as above, put $\varepsilon=\|\chi_AP_U\chi_B\|$. Then there is an orthogonal decomposition $U=U_A\oplus U_B$, where $U_A, U_B$ are subspaces, such that
\[
\|\chi_AP_{U_A}\|\le \sqrt \varepsilon,
\qquad
\|\chi_BP_{U_B}\|\le \sqrt \varepsilon.
\]
\end{prop}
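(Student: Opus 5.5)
By Lemma~\ref{lem:norm of APB} we have $\varepsilon=\|\m(A)^{1/2}\m(B)^{1/2}\|$, so the whole problem transfers to $\M_M(\CC)$. I will split $\CC^M$ using the spectral decomposition of the positive contraction $\m(A)$ and push the splitting forward by the isometry $W$.

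Write $\m(A)=\sum_k \lambda_k q_k$ with spectral projections $q_k$. Let $E\subseteq\CC^M$ be the spectral subspace of $\m(A)$ for eigenvalues $\le\varepsilon$, and let $E^\perp$ be the spectral subspace for eigenvalues $>\varepsilon$. Set $U_A:=W(E)$ and $U_B:=W(E^\perp)$. Since $W$ is an isometry onto $U$, this gives an orthogonal decomposition $U=U_A\oplus U_B$, with $P_{U_A}=Wp_EW^*$ and $P_{U_B}=Wp_{E^\perp}W^*$.

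For the $A$-estimate, let $u\in U_A$ be a unit vector, so $u=W\xi$ with $\xi\in E$ a unit vector. By \eqref{EQ:u and m},
\[
\|\chi_A u\|^2=\langle\m(A)\xi,\xi\rangle\le\varepsilon.
\]
Hence $\|\chi_AP_{U_A}\|\le\sqrt\varepsilon$.

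For the $B$-estimate, let $u=W\eta$ with $\eta\in E^\perp$ a unit vector. Then $\|\chi_Bu\|^2=\|\m(B)^{1/2}\eta\|^2$. On $E^\perp$, $\m(A)$ is invertible and $\m(A)\ge\varepsilon$, so $\eta=\m(A)^{1/2}\zeta$ with $\zeta\in E^\perp$ and $\|\zeta\|\le\varepsilon^{-1/2}$. Therefore
\[
\|\m(B)^{1/2}\eta\|=\|\m(B)^{1/2}\m(A)^{1/2}\zeta\|\le\varepsilon\cdot\varepsilon^{-1/2}=\sqrt\varepsilon,
\]
using $\|\m(B)^{1/2}\m(A)^{1/2}\|=\|(\m(A)^{1/2}\m(B)^{1/2})^*\|=\varepsilon$. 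This gives $\|\chi_BP_{U_B}\|\le\sqrt\varepsilon$.

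The main point needing care is the choice of threshold. It must be exactly $\varepsilon$, so that both sides give $\sqrt\varepsilon$: the $A$-side bound is $\sqrt t$ for threshold $t$, and the $B$-side bound is $\varepsilon/\sqrt t$, and these balance at $t=\varepsilon$. The degenerate cases are handled by the same construction. If $\varepsilon=0$, then $E=\ker\m(A)$ and the $B$-bound becomes $0$, which is correct because $\m(B)^{1/2}\m(A)^{1/2}=0$ forces $\m(B)^{1/2}$ to vanish on the range of $\m(A)$. If $E$ or $E^\perp$ is trivial, the corresponding subspace is $\{0\}$ and its estimate holds trivially.
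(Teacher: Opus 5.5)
Your proposal is correct and follows essentially the same argument as the paper. Both split $\CC^M$ by the spectral projections of $\m(A)$ at threshold exactly $\varepsilon$. Both bound the $A$-side via \eqref{EQ:u and m}, and the $B$-side by writing vectors in the upper spectral subspace as $\m(A)^{1/2}$ applied to a vector of norm at most $\varepsilon^{-1/2}$. Both treat the case $\varepsilon=0$ separately.
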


\begin{proof}
Set $e_1:= \chi_{[0,\varepsilon]}(\m(A))$ and $e_2:=\chi_{(\varepsilon,1]}(\m(A))$, which are the spectral projections of the matrix $\m(A)$ with $e_1 + e_2 = \Id_M$.  Define
\[
   U_A:= W( e_1 \CC^M)
   \quad \text{and} \quad
   U_B:= W (e_2 \CC^M).
\]
Then we have the decomposition $U=U_A\oplus U_B$. 

First, assume that $\varepsilon>0$. On $e_1 \CC^M$, we have $\m(A)\le \varepsilon \Id_M$. Hence for $\xi\in e_1 \CC^M$, $\|\chi_A W\xi\|^2 =\langle \m(A)\xi,\xi\rangle\leq \varepsilon\|\xi\|^2$, which implies that $\|\chi_AP_{U_A}\|\leq \sqrt \varepsilon$.
On the other hand, we have $\m(A)\geq \varepsilon \Id_M$ on $e_2 \CC^M$. Set $T:=\m(A)|_{e_2 \CC^M}$. Then $T$ is invertible and $\|T^{-1/2}\| \leq \varepsilon^{-1/2}$. For $\xi \in e_2 \CC^M$, we have $\xi=T^{1/2}T^{-1/2}\xi=\m(A)^{1/2}T^{-1/2}\xi$.
So
\begin{align*}
\|\chi_B W\xi\|
&=\|\m(B)^{1/2}\xi\|=\left\| \m(B)^{1/2} \m(A)^{1/2}T^{-1/2}\xi \right\|\\
&\le \|\m(B)^{1/2}\m(A)^{1/2}\| \cdot \|T^{-1/2}\|\cdot \|\xi\| \leq \varepsilon \cdot \varepsilon^{-1/2} \cdot \|\xi\| =\sqrt \varepsilon \cdot \|\xi\|,
\end{align*}
where we use \eqref{EQ:u and m} in the first equality and Lemma~\ref{lem:norm of APB} in the penultimate inequality. Hence $\|\chi_B P_{U_B}\| \leq \sqrt \varepsilon$. 

Finally for $\varepsilon=0$, note that $\m(A)$ vanishes on $e_1 \CC^M$ and hence by \eqref{EQ:u and m}, $\chi_AP_{U_A} = 0$. Moreover, Lemma~\ref{lem:norm of APB} shows that $\m(B)^{1/2}\m(A)^{1/2}=0$, which means that $\m(B)^{1/2}$ vanishes on $\ran(\m(A)^{1/2}) = e_2 \CC^M$. This gives $\chi_B P_{U_B} = 0$ by \eqref{EQ:u and m} again. 
\end{proof}

\begin{lem}\label{lem:large portion of norm}
 Let $(X,d)$ be a discrete metric space, $\H_0$ a Hilbert space, and $u \in \ell^2(X;\H_0)$ a unit vector. For $A \subseteq X$, $\varepsilon, \varepsilon'\in (0,1)$ and $R>0$, if $\chi_A P_u$ is $(\varepsilon,R)$-quasi-local and $\|u_A\| \geq \varepsilon'$, then $P_u$ is $(\sqrt{2}\varepsilon/\varepsilon', 2R)$-quasi-local.
\end{lem}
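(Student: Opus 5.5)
Since $P_u = \langle \cdot, u\rangle u$, we have $\chi_C P_u \chi_D = \langle \cdot, u_D\rangle u_C$ for any $C,D \subseteq X$. Hence
\[
\|\chi_C P_u \chi_D\| = \|u_C\|\,\|u_D\|.
\]
The hypothesis and the conclusion are therefore scalar inequalities on the quantities $\|u_C\|\|u_D\|$. Fix $C,D\subseteq X$ with $d(C,D)>2R$. The goal is to show $\|u_C\|\|u_D\| < \sqrt2\,\varepsilon/\varepsilon'$.

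First apply the hypothesis to the pair $(C,\cdot)$. We have $\chi_C\chi_A P_u \chi_{D'} = \chi_{C\cap A}P_u\chi_{D'}$. So for any $E,F$ with $d(E,F)>R$, quasi-locality gives $\|u_{E\cap A}\|\|u_F\|<\varepsilon$.

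The key step uses the triangle inequality. Since $d(C,D)>2R$, at least one of $d(A,C)>R$ or $d(A,D)>R$ holds, unless $A$ meets both $\N_R(C)$ and $\N_R(D)$. This is awkward, so instead I split $A$ itself. Put $A_1:=A\cap \N_R(C)$ and $A_2:=A\setminus \N_R(C)$. Then $d(A_2,C)>R$, and $d(A_1,D)>R$ because $d(C,D)>2R$. The hypothesis (with $E=A_2,F=C$ and $E=A_1,F=D$) gives
\[
\|u_{A_2}\|\|u_C\|<\varepsilon, \qquad \|u_{A_1}\|\|u_D\|<\varepsilon.
\]
Since $\|u_{A_1}\|^2+\|u_{A_2}\|^2=\|u_A\|^2\ge \varepsilon'^2$, one of them, say $\|u_{A_i}\|$, is at least $\varepsilon'/\sqrt2$.

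If $i=2$, then $\|u_C\|<\sqrt2\varepsilon/\varepsilon'$, and since $\|u_D\|\le1$ we get $\|u_C\|\|u_D\|<\sqrt2\varepsilon/\varepsilon'$. If $i=1$, then symmetrically $\|u_D\|<\sqrt2\varepsilon/\varepsilon'$ and the same conclusion follows. This gives $(\sqrt2\varepsilon/\varepsilon',2R)$-quasi-locality.

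The only subtle point is choosing the splitting of $A$ so that each piece is $R$-far from one of $C$, $D$. With that choice the $\sqrt2$ comes out exactly, and nothing else is needed beyond the rank-one factorisation of $\chi_C P_u\chi_D$.
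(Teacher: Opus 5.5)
Your proof is correct and is essentially the paper's argument. You use the same rank-one identity $\|\chi_C P_u\chi_D\|=\|u_C\|\,\|u_D\|$, the same splitting $A_1=A\cap\N_R(C)$, $A_2=A\setminus A_1$, and the same $\varepsilon'/\sqrt2$ pigeonhole. The only difference is that the paper argues by contradiction (first deducing $\|u_C\|,\|u_D\|\ge\sqrt2\varepsilon/\varepsilon'$), while you argue directly and use $\|u_C\|,\|u_D\|\le 1$ at the end.

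One small point, shared by both versions: the claim $d(A_2,C)>R$ needs the infimum of $d(a,C)$ over $a\in A_2$ to exceed $R$, and this can fail in general. To be fully rigorous, split along $\N_{R+\delta}(C)$ with $0<\delta<d(C,D)-2R$ instead.
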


\begin{proof}
 If not, there would exist $C,D \subseteq X$ with $d(C,D) > 2R$ such that $\|\chi_C P_u \chi_D\| \geq \sqrt{2}\varepsilon/\varepsilon'$. This implies that $\|u_C\| \geq \sqrt{2}\varepsilon/\varepsilon'$ and $\|u_D\| \geq \sqrt{2}\varepsilon/\varepsilon'$.

 Denote $A_1:=A \cap \N_R(C)$ and $A_2:=A \setminus A_1$. Since $\|u_A\| \geq \varepsilon'$, either $\|u_{A_1}\| \geq \varepsilon'/\sqrt{2}$ or $\|u_{A_2}\| \geq \varepsilon'/\sqrt{2}$. Assume the former holds, and the latter is similar. Then
 \begin{equation}\label{EQ:lem:large portion}
   \|\chi_{A_1} P_u \chi_D\| = \|u_{A_1}\| \cdot \|u_D\| \geq \frac{\varepsilon'}{\sqrt{2}} \cdot \frac{\sqrt{2}\varepsilon}{\varepsilon'} = \varepsilon.
 \end{equation}
 On the other hand, the assumption that $\chi_A P_u$ is $(\varepsilon,R)$-quasi-local shows that
 \[
   \|\chi_{A_1} P_u \chi_D\| = \|\chi_{A_1} (\chi_A P_u)\chi_D\| < \varepsilon
 \]
 since $d(A_1,D) > R$. This contradicts (\ref{EQ:lem:large portion}).
\end{proof}

Now we are ready to prove the quantitative decomposition:

\begin{thm}\label{thm:quantitative decomposition}
For each $M \in \NN$, there exist constants $t_M, \beta_M \in (0,1)$ and $C_M, D_M \geq 1$, depending only on $M$, such that the following holds: 
Let $(X,d)$ be a discrete metric space, $Y\subseteq X$, $H_0$ a Hilbert space, and let $U \subseteq \ell^2(X;\H_0)$ be an $M$-dimensional subspace. If $\chi_Y P_U$ is $(\varepsilon, R)$-quasi-local for some $\varepsilon \in (0,t_M)$ and $R>0$, then $U$ has an orthonormal basis $\{u^{(1)}, \cdots, u^{(M)}\}$ such that each $\chi_Y P_{u^{(i)}}$ is $(C_M \varepsilon^{\beta_M}, D_M R)$-quasi-local. 
\end{thm}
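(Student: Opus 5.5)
We argue by induction on $M$, using Proposition~\ref{prop:splitting} to peel off a piece of $U$ whenever some pair of far-apart sets witnesses non-quasi-locality, and Lemma~\ref{lem:large portion of norm} to control the pieces. For $M=1$ the statement is trivial with $C_1=D_1=\beta_1=1$: $U$ is spanned by a single unit vector $u$ and $P_U=P_u$.

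For the inductive step, suppose the result holds for all dimensions $<M$. Fix a threshold $\delta=\varepsilon^{\gamma}$, with $\gamma\in(0,1)$ to be chosen. We distinguish two cases.

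\emph{Case 1.} There exist $A,B\subseteq Y$ with $d(A,B)>KR$ (for a suitable constant $K$, say $K=4$) and $\|\chi_A P_U\chi_B\|\ge\delta$. Then $P_U$ restricted to $Y$ is "spread out". We apply Proposition~\ref{prop:splitting} with a modified split, choosing the spectral cut at $\delta$ rather than at $\|\chi_AP_U\chi_B\|$, to obtain a nontrivial decomposition $U=U_1\oplus U_2$. Here $U_1$ (the part where $\m(A)$ is large) has $\chi_B P_{U_1}$ small, and $U_2$ has $\chi_A P_{U_2}$ small. Nontriviality follows because both $\m(A)$ and $\m(B)$ have norm at least $\delta$.

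The key claim is that $\chi_YP_{U_i}$ is again $(C\varepsilon^{\beta},DR)$-quasi-local. Write $P_U=P_{U_1}+P_{U_2}$ and take $C,D$ with $d(C,D)>DR$. We need to bound the cross terms $\chi_C P_{U_1}\chi_D$ inside $\chi_YP_U$. To do so, use that every $u\in U_1$ carries a definite mass on $A$: by the spectral bound, $\|u_A\|\ge\sqrt\delta\|u\|$. Then argue as in Lemma~\ref{lem:large portion of norm}, splitting $A$ relative to $\N_R(C)$, to transfer the quasi-locality of $\chi_YP_U$ to $\chi_YP_{U_1}$, at the cost of a factor like $\varepsilon/\sqrt\delta$ and a doubling of $R$. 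This step yields bounds of the form $\varepsilon^{1-\gamma/2}$ after using the Lemma~\ref{lem:norm of APB} machinery on the matrices $\m(\cdot)$. Since $\dim U_i<M$, we then apply the induction hypothesis to each $U_i$ with $\varepsilon'\approx \varepsilon^{1-\gamma/2}$ and $R'\approx 2R$. Taking the union of the resulting orthonormal bases gives an orthonormal basis of $U$.

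\emph{Case 2.} No such pair exists. Then $\chi_YP_U$ is $(\delta,KR)$-quasi-local in a strong "all-pairs inside $Y$" sense, and the mass of $U$ on $Y$ is concentrated near a single region. Any orthonormal basis should then work. To check this, for $u\in U$ and $d(C,D)>KR$, estimate $\|\chi_Y P_u\chi_D\|\le\|u_{C\cap Y}\|\|u_D\|$. Using Lemma~\ref{lem:norm of APB}, we have $\|\m(C\cap Y)^{1/2}\m(D\cap Y)^{1/2}\|<\delta$. Combining this with the hypothesis on $\chi_YP_U$ (for the part of $D$ outside $Y$) should bound each $\|u_{C\cap Y}\|\|u_D\|$ by roughly $M\sqrt{\delta}$ or $\sqrt\varepsilon$.

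The main obstacle is precisely this transfer of quasi-locality from $\chi_YP_U$ to the subspace pieces (and to individual basis vectors in Case 2). Cross terms like $\chi_C P_{U_1}\chi_D$ are not directly controlled by $\chi_C P_U\chi_D$ without a lower bound on mass. That is why the spectral cut must be at a level $\delta=\varepsilon^\gamma$ chosen polynomially in $\varepsilon$. With that choice the exponents compose through at most $M$ induction steps, giving $\beta_M\approx 2^{-M}$-type exponents and $D_M\approx K^M$. One also needs $t_M$ small enough that $\delta<1$ and the recursive $\varepsilon'$ stay below $t_{M-1}$. If the lower-mass argument fails for general $u\in U_1$, the fallback is to handle the part $\chi_{Y\setminus A}$ separately by first refining $A$ to $\N_R(A)\cap Y$ and using the $R$-boundary to absorb the error.
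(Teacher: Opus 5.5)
There is a genuine gap, and it sits exactly at the step you flag yourself.

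\textbf{Case~1 as written.} Case~1 can never occur. For $A,B\subseteq Y$ with $d(A,B)>KR\geq R$, the hypothesis gives $\|\chi_AP_U\chi_B\|=\|\chi_A(\chi_YP_U)\chi_B\|<\varepsilon<\delta$. So everything falls into Case~2, where ``any orthonormal basis works'' is false: take $U$ spanned by two unit vectors supported at two far-apart points, and rotate the basis by $45^\circ$. Your nontriviality remark suggests you meant the condition $\|\m(A)\|,\|\m(B)\|\geq\delta$ with $A\subseteq Y$. With that reading Case~2 is fine, since then $\|u_C\|\,\|u_D\|<\sqrt\delta$.

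\textbf{The transfer step.} Under that reading, Case~1 needs $\chi_YP_{U_1}$ and $\chi_YP_{U_2}$ to be quasi-local, and your route does not give this.
\begin{itemize}
\item Lemma~\ref{lem:large portion of norm} is a rank-one statement. The step $\|\chi_{A_1}P_u\chi_D\|=\|u_{A_1}\|\,\|u_D\|$ in its proof has no analogue when $\dim U_1\geq 2$, because the mass on $A_1$ and the mass on $D$ may sit in different directions of $U_1$.
\item Even when $\dim U_1=1$ the numbers do not close. From $\chi_AP_U$ you only get quasi-locality of $\chi_AP_{U_1}$ at level $\varepsilon+\sqrt\delta$, since you must absorb $\chi_AP_{U_2}$, whose norm can be as large as $\sqrt\delta$. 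The mass lower bound on $U_1$ is also $\sqrt\delta$, so the ratio in the lemma is $\geq 1$, not $\varepsilon/\sqrt\delta$.
\item A cut at a fixed level can destroy quasi-locality outright. Let $u,v\in U$ be concentrated near far-apart points $p,q$, each with mass $\delta$ on $A$ (partly near $p$, resp.\ $q$). Let them share a tail of size $\eta\approx\varepsilon$ at one point of $A$ far from both, with a tail of opposite sign outside $A$ to keep $u\perp v$. Let a third vector far away supply $B$. Then $P_U$ is $(O(\varepsilon),R)$-quasi-local, and $\m(A)$ on $\mathrm{span}\{u,v\}$ is $\begin{pmatrix}\delta&\eta^2\\ \eta^2&\delta\end{pmatrix}$. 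Your cut gives $U_1=\CC(u+v)$, whose unit vector has mass about $\frac12$ near both $p$ and $q$.
\end{itemize}

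\textbf{How the paper avoids this.} The paper never asks the pieces of a splitting to be quasi-local; it extracts one vector at a time.
\begin{itemize}
\item Fix a unit vector $u\in U$. If $\chi_YP_u$ is not $(\Delta,R)$-quasi-local with $\Delta=2\varepsilon^{\beta/4}$, split $U=U_A\oplus U_B$ by Proposition~\ref{prop:splitting} at level $\varepsilon$, not $\delta$. Then $\chi_AP_{U_B}$ is $(\varepsilon+\sqrt\varepsilon,R)$-quasi-local.
\item Apply the induction hypothesis to $\chi_AP_{U_B}$ with $Y$ replaced by $A$; this is why the theorem is stated for arbitrary $Y$.
\item Pick a resulting basis vector $v^{(i_0)}$ with $\|v^{(i_0)}_A\|\geq\varepsilon^{\beta/4}/\sqrt M$. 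Now the mass bound $\varepsilon^{\beta/4}$ dominates the error $2C\varepsilon^{\beta/2}$, so the rank-one Lemma~\ref{lem:large portion of norm} does apply.
\item Finally, $\chi_YP_{U\ominus\CC u^{(1)}}=\chi_YP_U-\chi_YP_{u^{(1)}}$ is quasi-local by the triangle inequality. The induction continues on the complement with no transfer problem.
\end{itemize}

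\textbf{A possible repair of your scheme.} Your two-piece splitting can be rescued by a different argument from the one you sketch. Place the cut in a gap of length at least $\delta/(2M+2)$ in $\sigma(\m(A))\cap[\delta/2,\delta]$. Then write $P_{U_1}$ as an interpolating polynomial, vanishing at $0$, in the quasi-local operator $P_U\chi_AP_U=(\chi_AP_U)^*(\chi_AP_U)$, as in the proof of Theorem~\ref{thm:quantitative decomposition general case}.
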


\begin{proof}
 We argue by induction on $M$, and the case $M=1$ holds trivially. Assume the result holds for every dimension less than $M$. Since there are only finitely many such dimensions, we may choose parameters $t,\beta\in (0,1)$ and $C,D\geq 1$ such that the conclusion holds with $t,\beta,C,D$ for every dimension less than $M$.
 
We now prove the statement in dimension $M$. Denote $K:=2\sqrt{2M}\,C$ and we set
\[
t_M:=\min\left\{\frac{t^2}{4}, \left(\frac{t}{K+1}\right)^{4/\beta} \right\}, \quad \beta_M:=\frac{\beta^2}{4}, \quad C_M:=C(K+1) \quad \text{and} \quad D_M:=2D^2.
\]
We claim that these parameters work.
Fix a discrete metric space $(X,d)$, a Hilbert space $\H_0$, a subset $Y \subseteq X$ and an $M$-dimensional subspace $U \subseteq \ell^2(X;\H_0)$. Assume that $\chi_Y P_U$ is $(\varepsilon, R)$-quasi-local for some $\varepsilon \in (0,t_M)$.

Denote $\Delta:=2\varepsilon^{\beta/4}$. Since $\sqrt\varepsilon \leq \varepsilon^{\beta/4}$, we have 
\begin{equation}\label{EQ:Delta}
\Delta>\sqrt{\varepsilon} 
\quad \text{and} \quad
\Delta-\sqrt{\varepsilon}\ge \varepsilon^{\beta/4}.
\end{equation}
Take an arbitrary unit vector $u \in U$, and we divide into two cases.

 \noindent \textbf{Case I.} Suppose that $\chi_YP_u$ is $(\Delta,R)$-quasi-local. Since $\Delta=2\varepsilon^{\beta/4}\leq K\varepsilon^{\beta/4}$ and $D \geq 1$, $\chi_YP_{u^{(1)}}$ is $(K\varepsilon^{\beta/4},2DR)$-quasi-local for $u^{(1)}:=u$.

 \noindent \textbf{Case II.} Suppose that $\chi_YP_u$ is not $(\Delta,R)$-quasi-local. Then there exist $A \subseteq Y$ and $B \subseteq X$ with $d(A,B) > R$ such that $\|u_A\|, \|u_B\| \geq \Delta$. Since $\chi_YP_U$ is $(\varepsilon,R)$-quasi-local, we have $\|\chi_AP_U\chi_B\|<\varepsilon$. Applying Proposition~\ref{prop:splitting}, there is an orthogonal decomposition $U=U_A\oplus U_B$ such that
\begin{equation}\label{EQ:splitting in proof}
\|\chi_AP_{U_A}\|<\sqrt{\varepsilon}
\quad \text{and} \quad
\|\chi_BP_{U_B}\|<\sqrt{\varepsilon}.
\end{equation}
Write $u=a+b$ for $a\in U_A$ and $b \in U_B$. By \eqref{EQ:Delta} and \eqref{EQ:splitting in proof}, we obtain
\begin{equation}\label{EQ:lower bdd of bA}
\|b_A\| \geq \|u_A\|-\|a_A\| \geq \Delta-\sqrt\varepsilon \geq \varepsilon^{\beta/4}.
\end{equation}
Hence $U_B \neq 0$. Similarly, $U_A \neq 0$. Hence, $1\leq \dim(U_B) < M$.

Since $\chi_AP_{U} = \chi_A\chi_YP_{U}$ is $(\varepsilon,R)$-quasi-local and $\|\chi_AP_{U_A}\|<\sqrt\varepsilon$, we obtain that $\chi_AP_{U_B}$ is $(\varepsilon+\sqrt\varepsilon,R)$-quasi-local. Note that $\varepsilon+\sqrt\varepsilon
\leq 2\sqrt\varepsilon<t$ since $\varepsilon<t_M\leq \frac{t^2}{4}$. The inductive hypothesis applied to $\chi_AP_{U_B}$ provides an orthonormal basis $v^{(1)},\cdots,v^{(r)}$ of $U_B$ with $r=\dim U_B$ such that each $\chi_AP_{v^{(i)}}$ is $\left(C(\varepsilon+\sqrt\varepsilon)^\beta,DR\right)$-quasi-local. Also
\[
C(\varepsilon+\sqrt\varepsilon)^\beta \leq C(2\sqrt{\varepsilon})^\beta \leq 2C\varepsilon^{\beta/2}.
\]
Hence each $\chi_AP_{v^{(i)}}$ is $\left(2C\varepsilon^{\beta/2},DR\right)$-quasi-local.

Setting $w:=b/\|b\|$, \eqref{EQ:lower bdd of bA} shows that $\|w_A\|\geq \varepsilon^{\beta/4}$. Decomposing $w=\sum_{i=1}^r\lambda_iv^{(i)}$, the Cauchy-Schwarz inequality shows that
\[
\|w_A\| \leq \left(\sum_{i=1}^r\|v^{(i)}_A\|^2\right)^{1/2}.
\]
Hence, there exists $i_0 \in \{1,\cdots,r\}$ such that 
\[
\left\|v^{(i_0)}_A\right\| \geq \frac{\varepsilon^{\beta/4}}{\sqrt r} \geq \frac{\varepsilon^{\beta/4}}{\sqrt M}.
\]
Now Lemma~\ref{lem:large portion of norm} applied to $\chi_AP_{v^{(i_0)}}$ shows that $P_{v^{(i_0)}}$ is $\left( K\varepsilon^{\beta/4}, 2DR\right)$-quasi-local. Set $u^{(1)}:=v^{(i_0)}$. Then $\chi_YP_{u^{(1)}}$ is $(K\varepsilon^{\beta/4},2DR)$-quasi-local.

Therefore, in either \textbf{Case I} or \textbf{Case II}, we found $u^{(1)}\in U$ such that $\chi_YP_{u^{(1)}}$ is $(K\varepsilon^{\beta/4},2DR)$-quasi-local. For $U':=U \ominus\CC u^{(1)}$, $\chi_YP_{U'}$ is $(\varepsilon+K\varepsilon^{\beta/4},2DR)$-quasi-local. Since $\varepsilon < t_M \leq \left(\frac{t}{K+1}\right)^{4/\beta}$, we have $\varepsilon+K\varepsilon^{\beta/4} \leq (K+1)\varepsilon^{\beta/4} < t$. Applying the inductive hypothesis to $U'$, we obtain an orthonormal basis $\{u^{(2)}, \cdots, u^{(M)}\}$ of $U'$ such that each $\chi_YP_{u^{(i)}}$ is $\left(C((K+1)\varepsilon^{\beta/4})^\beta, D(2DR)\right)$-quasi-local for $i=2,\cdots,M$. Since
\[
C((K+1)\varepsilon^{\beta/4})^\beta \leq C(K+1)\varepsilon^{\beta^2/4} = C_M\varepsilon^{\beta_M}, \quad D(2DR)=2D^2R=D_MR
\]
and 
\[
K\varepsilon^{\beta/4} \leq C_M\varepsilon^{\beta_M}, \quad 2DR \leq 2D^2R=D_MR,
\]
we obtain that each $\chi_YP_{u^{(i)}}$ is $(C_M \varepsilon^{\beta_M}, D_M R)$-quasi-local for $i=1,\cdots, M$.
\end{proof}

We also need the following auxiliary lemma before we prove the general case.

\begin{lem}\label{lem:composition of quasi-lcoal}
  Let $(X,d)$ be a discrete metric space, $\H_0$ a Hilbert space and $T,S \in \B(\ell^2(X;\H_0))$. Assume that $T$ is $(\varepsilon_1, R_1)$-quasi-local and $S$ is $(\varepsilon_2, R_2)$-quasi-local for $\varepsilon_i>0$, $R_i>0$ and $i=1,2$. Then $TS$ is $(\|T\| \varepsilon_2 + \|S\|\varepsilon_1, R_1 + R_2)$-quasi-local.
\end{lem}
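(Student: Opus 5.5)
The plan is to argue directly from the definition. Fix $A,B\subseteq X$ with $d(A,B)>R_1+R_2$ and aim to show $\|\chi_A TS\chi_B\|<\|T\|\varepsilon_2+\|S\|\varepsilon_1$. The natural move is to insert an intermediate cutoff: let $C:=\N_{R_1}(A)$ and $C^c:=X\setminus C$, and split the identity as $\chi_C+\chi_{C^c}$ between $T$ and $S$:
\[
\chi_A TS\chi_B=\chi_A T\chi_{C^c} S\chi_B+\chi_A T\chi_C S\chi_B .
\]

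Each term is then estimated using one of the two quasi-locality hypotheses. For the first term, every $x\in C^c$ satisfies $d(x,A)>R_1$, so $d(A,C^c)>R_1$ (we use that $X$ is discrete and $d(x,A)$ is an infimum; if needed, replace $R_1$ by a slightly smaller radius so the strict inequality holds for the set distance). The $(\varepsilon_1,R_1)$-quasi-locality of $T$ gives $\|\chi_A T\chi_{C^c}\|<\varepsilon_1$, and hence
\[
\|\chi_A T\chi_{C^c}S\chi_B\|\le \varepsilon_1\|S\|.
\]
For the second term, every $x\in C$ has $d(x,A)\le R_1$, so by the triangle inequality $d(x,B)\ge d(A,B)-R_1>R_2$. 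To get this for the set distance, again use $d(A,B)>R_1+R_2$ with some slack: pick $\delta>0$ with $d(A,B)>R_1+R_2+\delta$ and take $C=\N_{R_1+\delta/2}(A)$ and its complement instead. Then $d(C,B)>R_2$ and $d(A,C^c)>R_1$ hold strictly. The $(\varepsilon_2,R_2)$-quasi-locality of $S$ then gives
\[
\|\chi_A T\chi_C S\chi_B\|\le \|T\|\varepsilon_2.
\]

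Adding the two estimates gives $\|\chi_ATS\chi_B\|<\|T\|\varepsilon_2+\|S\|\varepsilon_1$. The inequality is strict as long as at least one of $\|T\|,\|S\|$ is nonzero. If both vanish then $TS=0$ and the bound $0<0$ fails, but then $\chi_ATS\chi_B=0$; in that degenerate case the statement should be read with the convention that the zero operator is quasi-local, or one notes that $\varepsilon_i>0$ makes the claim vacuous in spirit. The only genuinely delicate point is keeping all distance inequalities strict, since the definition uses $d(A,B)>R$. The slack $\delta$ in the choice of $C$ handles this, so no serious obstacle is expected.
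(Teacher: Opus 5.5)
Your proposal is correct and is essentially the paper's argument: split $\chi_A TS\chi_B$ using $\chi_{\N_{R_1}(A)}$ and its complement, then apply the quasi-locality of $T$ to one piece and of $S$ to the other. Your enlarged cutoff $C=\N_{R_1+\delta/2}(A)$ is a worthwhile refinement, since the paper's assertion $d(A,X\setminus\N_{R_1}(A))>R_1$ can fail when distances from $A$ to the complement only decrease to $R_1$; note it is this larger radius that works, whereas the ``slightly smaller radius'' in your first paragraph goes the wrong way, and your treatment of strictness (including the degenerate case $T=S=0$) covers a point the paper's chain of $\leq$ leaves implicit.
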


\begin{proof}
  Given $A, B\subset X$ with $d(A,B)>R_1 + R_2$, note that $d(\N_{R_1}(A), B) > R_2$ and $d(A, X \setminus \N_{R_1}(A)) > R_1$. Hence we have
  \begin{align*}
    \|\chi_{A}TS\chi_{B}\|& \leq\|\chi_{A}T\chi_{\N_{R_1}(A)}S\chi_{B}\|+\|\chi_{A}T\chi_{X \setminus \N_{R_1}(A)}S\chi_{B}\| \\
   &\leq \|T\|\cdot\|\chi_{\N_{R_1}(A)}S\chi_{B}\| + \|\chi_{A}T\chi_{X \setminus \N_{R_1}(A)}\|\cdot\|S\|\leq\|T\|\varepsilon_2 + \|S\|\varepsilon_1,
 \end{align*}
 which concludes the proof.
\end{proof}

\begin{thm}\label{thm:quantitative decomposition general case}
For each $M\in\NN$, there exist constants $s_M, \alpha_M\in(0,1)$ and $A_M, B_M\geq 1$, depending only on $M$, such that the following holds: Let $(X,d)$ be a discrete metric space, $Y\subseteq X$, $\H_0$ a Hilbert space, and let $T\in\B(\ell^2(X;\H_0))$ be a contraction of rank $M$. If $\chi_Y T$ is $(\varepsilon,R)$-quasi-local for some $\varepsilon\in (0,s_M)$ and $R>0$, then $(\ker T)^\perp$ has an orthonormal basis $\{u^{(1)},\ldots,u^{(M)}\}$ such that each $\chi_YTP_{u^{(i)}}$ is $(A_M\varepsilon^{\alpha_M},B_MR)$-quasi-local. 
\end{thm}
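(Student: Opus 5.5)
We reduce the statement to Theorem~\ref{thm:quantitative decomposition} through the polar decomposition of $T$. Write $T=V|T|$. Here $|T|=(T^*T)^{1/2}$ is a positive finite-rank contraction supported on $V_0:=(\ker T)^\perp$, an $M$-dimensional subspace. The partial isometry $V$ maps $V_0$ isometrically onto $U:=\ran(T)$, which is also $M$-dimensional. The natural subspace to feed into Theorem~\ref{thm:quantitative decomposition} is $U$, with the same set $Y$. The difficulty is that the hypothesis concerns $\chi_Y T$ and not $\chi_Y P_U$, and $T$ may have small singular values.

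\textbf{Step 1: quasi-locality of $\chi_Y P_U$ on the well-conditioned part.} We first split off the small singular values. Diagonalise $|T|=\sum_k \sigma_k\, P_{f_k}$ with $\{f_k\}$ an orthonormal basis of $V_0$, and fix a threshold $\delta=\varepsilon^{\gamma}$ for a suitable small $\gamma\in(0,1)$. Let $V_0^{\geq}$ be the span of those $f_k$ with $\sigma_k\ge\delta$, and $V_0^{<}$ the span of the rest. Put $U^{\geq}:=T(V_0^{\geq})$.

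On $U^{\geq}$ we have $P_{U^{\geq}}=T\,g(|T|)\,V^*P_{U^{\geq}}$ with $\|g(|T|)\|\le\delta^{-1}$. Then
\[
\chi_Y P_{U^{\geq}} = \chi_Y T \cdot \bigl(|T|^{-1}\chi_{[\delta,1]}(|T|)\bigr)V^*,
\]
and the right factor has norm at most $\delta^{-1}$. The trouble is that the right factor need not be quasi-local, so Lemma~\ref{lem:composition of quasi-lcoal} does not apply directly. Instead we estimate directly. For $A\subseteq X$ and $B$ with $d(A,B)>R$, a unit vector $\xi\in \chi_B$-range gives
\[
\|\chi_A\chi_Y P_{U^{\geq}}\chi_B\xi\|=\sup_{\eta}|\langle \chi_B\xi,\, P_{U^{\geq}}\chi_Y\chi_A\eta\rangle|.
\]
We pass to adjoints: $P_{U^{\geq}}\chi_{Y\cap A}$ factors as $(V|T|^{-1}\chi_{[\delta,1]})\,T^*\chi_{Y\cap A}$. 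Its compression against $\chi_B$ is controlled by $\|\chi_B V |T|^{-1}\chi_{[\delta,1]}\| \cdot$(norm of $T^*\chi_{Y\cap A}$), which is still not localised.

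I therefore expect the real route to be different: apply Theorem~\ref{thm:quantitative decomposition} to $U$ with $Y$ replaced by a suitable set. Alternatively, exploit that the rank-one pieces $\chi_Y T P_{u}$ equal $\chi_Y (Tu)\otimes u^*$, so their quasi-locality requires smallness of $\|\chi_{A\cap Y}Tu\|\cdot\|u_B\|$ whenever $d(A,B)>R$.

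\textbf{Step 2: a mixed version of Theorem~\ref{thm:quantitative decomposition}.} The cleanest plan is to rerun the induction of Theorem~\ref{thm:quantitative decomposition} for the pair $(T,V_0)$ rather than for a projection. Proposition~\ref{prop:splitting} generalises by replacing $\m(A)=W^*\chi_AW$ with two matrices: $\m_T(A):=W^*T^*\chi_{A\cap Y}TW$ for the range side, and $\m(B)=W^*\chi_BW$ for the domain side, where $W$ parametrises $V_0$. The identity $\|\chi_{A\cap Y}T\chi_B\|=\|\m_T(A)^{1/2}\m(B)^{1/2}\|$ holds by the same computation as Lemma~\ref{lem:norm of APB}. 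The proof of Proposition~\ref{prop:splitting} then goes through verbatim and splits $V_0=U_A\oplus U_B$ with $\|\chi_{A\cap Y}TP_{U_A}\|\le\sqrt\varepsilon$ and $\|\chi_B P_{U_B}\|\le\sqrt\varepsilon$.

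Lemma~\ref{lem:large portion of norm} also generalises to the rank-one operator $\chi_Y(Tu)\otimes u^*$. If $\|\chi_{A}Tu\|\ge\varepsilon'$ with $A\subseteq Y$, then $(\chi_Y Tu)\otimes u^*$ is $(\sqrt2\varepsilon/\varepsilon',2R)$-quasi-local. The proof is the same splitting $A_1=A\cap\N_R(C)$, now using $\|\chi_{A_1}Tu\|\,\|u_D\|$.

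\textbf{Step 3: the induction.} The Case I/Case II induction then runs as before. The only change is that the orthogonal complement step uses $\chi_Y T P_{V_0\ominus\CC u^{(1)}}=\chi_YT-\chi_YTP_{u^{(1)}}$, which is $(\varepsilon+K\varepsilon^{\beta/4},2DR)$-quasi-local because $\|T\|\le1$. Exponents and constants are chosen as in Theorem~\ref{thm:quantitative decomposition}.

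The main obstacle is that in Case II we need $u^{(1)}$ itself to be useful. We must find a vector $v^{(i_0)}\in U_B$ with $\|\chi_A T v^{(i_0)}\|$ large, and deduce this from $\|\chi_ATb\|\ge\Delta-\sqrt\varepsilon$. This works because $\chi_ATu=\chi_ATa+\chi_ATb$ and $\|\chi_ATa\|\le\sqrt\varepsilon$. So the case split must be phrased in terms of $\|\chi_ATu\|$ and $\|u_B\|$ rather than $\|u_A\|$. I expect checking that this asymmetric bookkeeping closes the induction, and that the inductive hypothesis is applied to $\chi_AT|_{U_B}$, whose rank may drop, to be the delicate point. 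The remedy for the rank drop is to state the induction for rank at most $M$ and allow zero vectors to be completed arbitrarily.
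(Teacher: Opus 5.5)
Your Steps~2--3 take a genuinely different route from the paper. The route is viable, but one of your two generalised lemmas is false as stated.

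\textbf{How the paper argues, and what each route buys.} The paper does not rerun the induction. It forms $S:=T^*\chi_YT=(\chi_YT)^*(\chi_YT)$, which is $(2\varepsilon,2R)$-quasi-local. It picks an interval inside $[\varepsilon^{2\alpha_M},2\varepsilon^{2\alpha_M}]$ of length at least $\varepsilon^{2\alpha_M}/(2M+2)$ that misses $\sigma(S)$. It writes the spectral projection $P$ of $S$ above this interval as a polynomial in $S$, with coefficients controlled by Newton divided differences, so $P$ is quasi-local. Theorem~\ref{thm:quantitative decomposition} (with $Y=X$) then applies to $\ran(P)$, and Lemma~\ref{lem:composition of quasi-lcoal} passes the estimate to $\chi_YTP_{u^{(i)}}$. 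The remaining basis vectors of $(\ker T)^\perp$ lie in $\ran(1-P)$, where $\|\chi_YT(1-P)\|\le\sqrt2\,\varepsilon^{\alpha_M}$.

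Your abandoned Step~1 was close to this. The two changes that make it work are:
\begin{itemize}
\item cutting the spectrum of $(\chi_YT)^*(\chi_YT)$ on the domain side, rather than the spectrum of $|T|$;
\item getting quasi-locality of the cut from a polynomial representation, rather than by inverting $|T|$.
\end{itemize}
The paper's route uses the projection case as a black box, at the cost of the weaker exponent $\alpha_M=\beta_M/(2(1+2M\beta_M))$ and large constants. Your mixed induction avoids functional calculus, contains Theorem~\ref{thm:quantitative decomposition} as the case $T=P_U$, and keeps the recursion $\beta\mapsto\beta^2/4$.

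\textbf{What is already correct.} Your version of Proposition~\ref{prop:splitting} holds: with $G_1:=\chi_{A\cap Y}TW$ and $G_2:=\chi_BW$ one has $\chi_{A\cap Y}TP_{V_0}\chi_B=G_1G_2^*$ and $\|G_1G_2^*\|=\|(G_1^*G_1)^{1/2}(G_2^*G_2)^{1/2}\|$, so the spectral splitting goes through verbatim. The rank-drop worry is empty. Since $T$ is injective on $U_B\subseteq(\ker T)^\perp$, we have $(\ker TP_{U_B})^\perp=U_B$, so the inductive hypothesis applies to $TP_{U_B}$ with $Y$ replaced by $A$. Also $U_A\neq 0$ because, writing $u=a+b$, $\|a_B\|\ge\|u_B\|-\|b_B\|\ge\Delta-\sqrt\varepsilon$.

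\textbf{The step that fails: your analogue of Lemma~\ref{lem:large portion of norm}.} From only ``$\chi_ATP_u$ is $(\varepsilon,R)$-quasi-local and $\|\chi_ATu\|\ge\varepsilon'$'' one cannot conclude that $(\chi_YTu)\otimes u^*$ is $(\sqrt2\varepsilon/\varepsilon',2R)$-quasi-local. Take $Y=X$, $A=\{a\}$, $u=\delta_a$ and $T=\frac1{\sqrt2}(\delta_a+\delta_c)\otimes\delta_a^*$ with $d(a,c)>2R$. Then $\chi_ATP_u$ is $(\varepsilon,R)$-quasi-local for every $\varepsilon$ and $\|\chi_ATu\|=1/\sqrt2$, yet $\|\chi_{\{c\}}TP_u\chi_{\{a\}}\|=1/\sqrt2$.

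In your sketch the break is in the branch $\|\chi_{A_2}Tu\|\ge\varepsilon'/\sqrt2$. The contradiction there needs $\|u_C\|$ large, but a violation for $(\chi_YTu)\otimes u^*$ only gives $\|\chi_{C\cap Y}Tu\|$ large.

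\textbf{The repair.} Aim the splitting at $P_u$ instead. If $d(C,D)>2R$ and $\|u_C\|\,\|u_D\|\ge\sqrt2\varepsilon/\varepsilon'$, both branches close exactly as in the paper, so $P_u$ is $(\sqrt2\varepsilon/\varepsilon',2R)$-quasi-local. Then, for $u\in V_0$, write $\chi_YTP_u=(\chi_YTP_{V_0})P_u$. Apply Lemma~\ref{lem:composition of quasi-lcoal}, using the current-level hypothesis that $\chi_YTP_{V_0}$ is $(\varepsilon,R)$-quasi-local. In Case~II this turns $(K\varepsilon^{\beta/4},2DR)$ into $(\varepsilon+K\varepsilon^{\beta/4},(2D+1)R)$, which is absorbed by enlarging $C_M$ and $D_M$. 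With this change your induction closes.
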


\begin{proof}
By adjusting the constants in Theorem~\ref{thm:quantitative decomposition} if necessary, we may assume that the same $t_M,\beta_M,C_M,D_M$ work in every dimension at most $M$.

Fix a discrete metric space $(X,d)$, a subset $Y \subseteq X$, a Hilbert space $\H_0$, and a contraction $T\in\B(\ell^2(X;\H_0))$ of rank $M$. Choose 
\[
\alpha_M:=\frac{\beta_M}{2(1+2M\beta_M)}, \quad K_M:=\frac{4M(4^M-1)(2M+2)^M}{3},
\]
and $s_M\in(0,1)$ sufficiently small such that $K_M \cdot s_M^{1-2M\alpha_M}<t_M$. 

Assume that $\chi_Y T$ is $(\varepsilon, R)$-quasi-local for some $\varepsilon \in (0,s_M)$. Then $S:=T^*\chi_YT$ is $(2\varepsilon,2R)$-quasi-local. Since $S$ has at most $M$ non-zero spectral values, we can take a closed interval $[a,b] \subseteq [\varepsilon^{2\alpha_M},2\varepsilon^{2\alpha_M}]$ such that $\sigma(S) \cap [a,b] = \emptyset$ and 
\[
b-a \geq \frac{\varepsilon^{2\alpha_M}}{2(M+1)}.
\]

Let $\lambda_1<\cdots<\lambda_r$ be the distinct non-zero points in \(\sigma(S)\) for some $r \leq M$, and set $\lambda_0=0$. Define \(f\) on \(\{\lambda_0,\ldots,\lambda_r\}\) by $f(\lambda_i)=0$ for $\lambda_i < a$ and $f(\lambda_i)=1$ if $\lambda_i > b$. Take $P:=f(S)$, and  we have
\begin{equation}\label{eq:simple-low}
\|\chi_Y T(1-P)\|\leq\sqrt2\,\varepsilon^{\alpha_M}.
\end{equation}
Now take the unique polynomial $p$ of degree at most $r$ such that $p(\lambda_i) = f(\lambda_i)$ for $i=0,1,\cdots,r$. By the Newton interpolation formula, we have
\[
p(t):=\sum_{k=1}^r f[\lambda_0,\ldots,\lambda_k] \prod_{j=0}^{k-1}(t-\lambda_j),
\]
where $f[\lambda_0,\ldots,\lambda_k]$ denotes the divided difference defined by $f[\lambda_i,\lambda_j] =\frac{f(\lambda_j)-f(\lambda_i)}{\lambda_j-\lambda_i}$ and recursively,
\[
f[\lambda_0,\ldots,\lambda_k] = \frac{f[\lambda_1,\ldots,\lambda_k]-f[\lambda_0,\ldots,\lambda_{k-1}]}{\lambda_k-\lambda_0}.
\]
By the choice of $f$, an induction gives $\bigl|f[\lambda_0,\ldots,\lambda_k]\bigr| \leq 2^{k-1}(b-a)^{-k}$ for each $k$. Writing $p=\sum_{j=1}^r c_j t^j$, we obtain that
\begin{equation}\label{EQ:Newton}
\sum_{j=1}^{r}|c_j| \leq \sum_{k=1}^{r} 2^{k-1}(b-a)^{-k}2^k \leq (b-a)^{-M} \cdot \sum_{k=1}^M 2^{2k-1} \leq \left(\frac{2M+2}{\varepsilon^{2\alpha_M}}\right)^{M} \cdot \frac{2(4^M-1)}{3}.
\end{equation}
Since $p=f$ on $\sigma(S)$, we have $P=p(S)=\sum_{j=1}^{r}c_jS^j$. 

Recall that $S$ is a contraction and is $(2\varepsilon,2R)$-quasi-local. Using Lemma~\ref{lem:composition of quasi-lcoal} several times, $S^j$ is $(2j\varepsilon, 2jR)$-quasi-local for every $1\leq j \leq r$. Combining with \eqref{EQ:Newton}, we obtain that for any $E,F\subseteq X$ with $d(E,F)>2MR$, we have
\[
\|\chi_E P\chi_F\|\leq \sum_{j=1}^{r}|c_j|\cdot \|\chi_E S^j\chi_F\| < 2M\varepsilon\cdot \sum_{j=1}^{r}|c_j| \leq K_M \cdot \varepsilon^{1-2M\alpha_M}.
\]
Hence, $P$ is $(K_M \varepsilon^{1-2M\alpha_M}, 2MR)$-quasi-local. 

If $P=0$, then \eqref{eq:simple-low} implies that $\|\chi_Y T\|\leq\sqrt2\,\varepsilon^{\alpha_M}$. Since $\dim(\ker T)^\perp=M$, any orthonormal basis $\{u^{(1)},\ldots,u^{(M)}\}$ of $(\ker T)^\perp$ satisfies $\|\chi_YTP_{u^{(i)}}\| \leq \sqrt{2}\varepsilon^{\alpha_M}$.

Now assume $P \neq 0$. Set $m:=\rank(P)\leq M$ and Theorem~\ref{thm:quantitative decomposition} provides an orthonormal basis $\{u^{(1)},\ldots,u^{(m)}\}$ of $\ran(P)$ such that each $P_{u^{(i)}}$ is
\[
\left(
C_MK_M^{\beta_M}
\varepsilon^{\beta_M(1-2M\alpha_M)},
\,2MD_MR
\right)\text{-quasi-local}.
\]
Since $\chi_YT$ is $(\varepsilon,R)$-quasi-local, Lemma~\ref{lem:composition of quasi-lcoal} shows that each $\chi_YTP_{u^{(i)}}$ is
\[
\left(
(1+C_MK_M^{\beta_M})\varepsilon^{\alpha_M}, (2MD_M+1)R
\right)\text{-quasi-local},
\]
where we use the fact $\beta_M(1-2M\alpha_M)>\alpha_M$ by the choice of $\alpha_M$.

Since $P$ is a spectral projection of $S=T^*\chi_YT$ corresponding to non-zero spectral values, we have
\[
\operatorname{Ran}(P)\subseteq(\ker S)^\perp
\subseteq(\ker T)^\perp.
\]
Complete $\{u^{(1)},\ldots,u^{(m)}\}$ to an orthonormal basis $\{u^{(1)},\ldots,u^{(M)}\}$ of $(\ker T)^\perp$. For $m<i\leq M$, we have
$u^{(i)}\in\ran(1-P)$, and hence~\eqref{eq:simple-low} gives
\[
\|\chi_YTP_{u^{(i)}}\|
\leq
\|\chi_YT(1-P)\|
\leq
\sqrt{2}\varepsilon^{\alpha_M}.
\]

Finally, we set
\[
A_M:=2+C_MK_M^{\beta_M}
\quad \text{and} \quad
B_M:=2MD_M+1.
\]
The preceding estimates show that $\chi_YTP_{u^{(i)}}$ is $(A_M\varepsilon^{\alpha_M},B_MR)$-quasi-local for each $i=1,\cdots,M$. This completes the proof.
\end{proof}

Now we use Theorem~\ref{thm:quantitative decomposition general case} to prove Theorem~\ref{thm:quantitative decomposition intro} as follows.

\begin{proof}[Proof of Theorem~\ref{thm:quantitative decomposition intro}]
By decomposing according to the block ranks, we may assume without loss of generality that $T=\SOT\text{-}\sum_{n\in \NN} T_n$ has block-rank $M$. Since $T$ is \((\varepsilon,R)\)-quasi-local, so is each \(T_n\). Applying Theorem~\ref{thm:quantitative decomposition general case} to \(T_n\), we obtain an orthonormal basis $\{u_n^{(1)},\ldots,u_n^{(M)}\}$ of \((\ker T_n)^\perp\) such that each \(T_nP_{u_n^{(j)}}\) is \((A_M\varepsilon^{\alpha_M},B_MR)\)-quasi-local. Define
\[
T^{(j)}:=\SOT\text{-}\sum_{n\in \NN} T_nP_{u_n^{(j)}} \quad \text{for} \quad j=1,\cdots,M.
\]
Then each \(T^{(j)}\) is a block-diagonal contraction of block-rank at most one and is \((2A_M\varepsilon^{\alpha_M},B_MR)\)-quasi-local. Since $T_n=\sum_{j=1}^{M}T_nP_{u_n^{(j)}}$ for each \(n\), we obtain $T=T^{(1)}+\cdots+T^{(M)}$. Replacing \(A_M\) by \(2A_M\), we conclude the proof. 
\end{proof}

\section{Quantitative approximation for block-rank-one case}\label{sec: rank one}

This section is devoted to the proof of Theorem~\ref{thm:quantitative version of rank one intro}. We start with the case of projections and $\H_0 = \CC$, stated as follows. Recall that for a discrete metric $(X,d)$, we denote $N_X(R)=\sup_{x\in X} |B(x,R)|$.

\begin{thm}\label{thm:quantitative version of rank one complex case}
 There exist functions
 \[
 L:(0,\frac{1}{10}) \longrightarrow (0, \infty) \quad \text{and} \quad h: (0,\frac{1}{10}) \times (0,+\infty)\times [1,+\infty) \longrightarrow (0,\infty)
 \]
such that for any discrete metric space $(X,d)$ of bounded geometry and a rank-one projection $P$ which is $(\varepsilon, R)$-quasi-local for some $\varepsilon \in (0,\frac{1}{10})$ and $R>0$, there exists $T \in \B(\ell^2(X))$ such that $\ppg(T) \leq h(\varepsilon, R, 4N_X(L(\varepsilon)R))$ and $\|T - P\| \leq 4\varepsilon$. Moreover, we can make $L$ non-increasing, $h$ non-increasing in the first variable and non-decreasing in the third.
\end{thm}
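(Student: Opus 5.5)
The plan is to approximate $P=P_u$, for a unit vector $u\in\ell^2(X)$, by $T:=P_{w}$-type operators $w w^*$, where $w=u_E$ is the truncation of $u$ to a set $E$ of controlled diameter. If $\mathrm{diam}(E)\le L(\varepsilon)R+2R$, then $\ppg(ww^*)\le \mathrm{diam}(E)$. We also have
\[
\|uu^*-ww^*\|\le \|(u-w)u^*\|+\|w(u-w)^*\|\le 2\|u_{X\setminus E}\|,
\]
so it suffices to find such an $E$ with $\|u_{X\setminus E}\|\le 2\varepsilon$. The key input is the elementary identity $\|\chi_A P_u\chi_B\|=\|u_A\|\cdot\|u_B\|$, which was already used in Lemma~\ref{lem:large portion of norm}. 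Quasi-locality therefore says exactly that $\|u_A\|\cdot\|u_B\|<\varepsilon$ whenever $d(A,B)>R$. We will choose $h(\varepsilon,R,N):=(L(\varepsilon)+2)R$, which is trivially non-decreasing in $N$. Any larger monotone choice, for instance one needed to absorb a dependence on $N_X(L(\varepsilon)R)$, is harmless.

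\textbf{Step 1 (a set of controlled diameter carrying half the mass).} Fix a point $x_0$ in the support of $u$ and consider the increasing balls $E_k:=\N_{kR}(x_0)$. The masses $m_k:=\|u_{E_k}\|^2$ increase to $1$. Suppose $m_{k}<1/4$ for all $k\le L$, with $L=L(\varepsilon)$ to be chosen. We want a contradiction, and the idea is to argue with annuli. Consider instead the indices $k$ at which $m_k$ passes between the levels $1/4$ and $3/4$. If there are many such consecutive $k$, then by pigeonhole some annulus $E_{k+1}\setminus E_k$ has mass at most $1/(2\cdot\#\{k\})$. We then take $A:=E_k$ and $B:=X\setminus E_{k+1}$. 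These satisfy $d(A,B)>R$, and both have mass comparable to $1/4$ up to the small annulus, so $\|u_A\|\|u_B\|\gtrsim 1/4>\varepsilon$, contradicting quasi-locality. Hence the number of radii for which $m_k\in[1/4,3/4]$ is bounded. Making this clean suggests a better choice of centre: pick $x_0$ (or a first index $k_0$) so that $m_{k_0}\ge 1/4$ already holds at a controlled radius. This is where the main obstacle lies, and I return to it in the last paragraph. Once it is settled, we obtain $k\le L(\varepsilon)$ with $\|u_{E_k}\|^2\ge 1/4$.

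\textbf{Step 2 (tail estimate).} Given $E'$ with $\|u_{E'}\|\ge 1/2$, set $E:=\N_R(E')$. Then $d(E',X\setminus E)>R$, so quasi-locality gives
\[
\|u_{X\setminus E}\|<\varepsilon/\|u_{E'}\|\le 2\varepsilon.
\]
Moreover $\mathrm{diam}(E)\le 2(k+1)R$. Putting $w=u_E$ and $T=ww^*$ yields $\|T-P\|\le 4\varepsilon$ and $\ppg(T)\le 2(L(\varepsilon)+1)R$. We should rename $L$ so that this matches $h$. Monotonicity is arranged at the end by replacing $L$ with $\sup_{\varepsilon'\ge\varepsilon}L(\varepsilon')$, which works as long as $L$ is bounded on each $[\varepsilon,1/10)$.

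\textbf{Main obstacle.} The crux is Step 1: showing that a single ball of radius $L(\varepsilon)R$ carries mass at least $1/4$, with $L$ independent of $X$. I would first try the following. Choose $x_0$ to maximize $\|u_{B(x,R)}\|$. If all $R$-balls have tiny mass, build a maximal $2R$-separated family of points and greedily split the support into two $R$-separated groups of comparable mass. This again contradicts $\|u_A\|\|u_B\|<\varepsilon$, and the greedy split is the place where the bounded geometry constant $N_X(L(\varepsilon)R)$ may enter. That would explain its appearance in $h$.
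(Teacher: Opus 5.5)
\textbf{There is a genuine gap, at the step you flagged as the main obstacle.} The claim in Step 1 is false: some ball of radius $L(\varepsilon)R$ need not carry $\|u_E\|^2\ge 1/4$, even if $L$ may depend on $R$ and $N_X(L(\varepsilon)R)$. Expanders give a counterexample. Let $X$ be a finite connected $d$-regular graph on $n$ vertices with vertex-expansion constant $c>0$, i.e.\ $|\N_1(A)|\ge(1+c)|A|$ whenever $|A|\le n/2$. Let $u:=n^{-1/2}\chi_X$, so that $\mu(A)=|A|/n$.

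\begin{itemize}
\item Suppose $\mu(A),\mu(B)\ge\varepsilon^2$ and $(1+c)^{\lfloor R/2\rfloor}\varepsilon^2>1/2$. Iterating expansion gives $\mu(\N_{\lfloor R/2\rfloor}(A))>1/2$, and likewise for $B$. These neighbourhoods are disjoint when $d(A,B)>R$, a contradiction.
\item Hence $\|\chi_AP_u\chi_B\|=\sqrt{\mu(A)\mu(B)}<\varepsilon$. So $P_u$ is $(\varepsilon,R)$-quasi-local with $R$ depending only on $\varepsilon$ and $c$, not on $n$. Also $N_X(r)\le (d+1)^r$ is independent of $n$.
\item Yet every ball satisfies $\|u_{B(x,r)}\|^2\le (d+1)^r/n\to 0$. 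So no ball of radius $L(\varepsilon)R$ carries mass $1/4$ once $n$ is large.
\end{itemize}

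Consequently your greedy split into two $R$-separated pieces of comparable mass cannot exist; its impossibility is exactly what expansion means. Your annulus argument only controls how $m_k$ crosses the window $[1/4,3/4]$. Quasi-locality says nothing about how long $m_k$ takes to grow from $|u(x_0)|^2$ to $1/4$, and here that takes about $\log n$ steps.

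The whole truncation strategy fails, not just Step 1. If $T$ is supported in $E\times E$, then
$\|T-P_u\|\ge\|\chi_{X\setminus E}(T-P_u)\chi_{X\setminus E}\|=\|u_{X\setminus E}\|^2$.
This is close to $1$ for every $E$ of bounded diameter when $n$ is large.

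\textbf{The missing idea.} Quasi-locality of $P_u$ gives \emph{expansion} of $\mu=|u|^2$, not concentration: for $4\varepsilon^2\le\mu(A)\le 7/8$ one has $\mu(\partial_R(A))>\frac{1}{28}\mu(A)$. So the approximant must spread over the whole support while still having finite propagation. The paper proceeds as follows.

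\begin{itemize}
\item It iterates expansion to scale $L(\varepsilon)R$.
\item It removes a set of measure $O(\varepsilon^2)$: the points where the density ratios along the $N=4N_X(L(\varepsilon)R)$ partial translations are extreme, plus a maximal non-expanding set. The remainder $Y$ satisfies a uniform relative isoperimetric inequality.
\item It builds a reversible Markov kernel on $Y$ from these partial translations, with Cheeger constant at least $\varepsilon^2/(4N^2)$.
\item It obtains a spectral gap via the Cheeger inequality.
\item It approximates $P$ by a rescaled compression of $(\frac12\chi_Y+\frac12\mathfrak{P})^K$, which has propagation at most $KL(\varepsilon)R$.
\end{itemize}

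The number of steps $K=K(\varepsilon,N)$ needed for the spectral gap to take effect is where the dependence of $h$ on its third variable, $4N_X(L(\varepsilon)R)$, comes from.
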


Theorem~\ref{thm:quantitative version of rank one complex case} is a quantitative version of \cite[Theorem 6.1]{LSZ23}. The proof follows the one for \cite[Theorem 6.1]{LSZ23}, and will be given in the next section. Let us first use it to prove the projection case in Theorem~\ref{thm:quantitative version of rank one intro}, stated as follows. We say that $T \in \B(\ell^2(X;\H_0))$ has \emph{rank locally at most one} if $rank(T_{x,y}) \leq 1$ for each $x,y\in X$.

\begin{cor}\label{cor:quantitative version of rank one}
The functions $L$ and $h$ in Theorem~\ref{thm:quantitative version of rank one complex case} satisfy the following: 
For any discrete metric space $(X,d)$ of bounded geometry, Hilbert space $\H_0$ and a rank-one projection $P$ which is $(\varepsilon, R)$-quasi-local for some $\varepsilon \in (0,\frac{1}{10})$ and $R>0$, there exists $T \in \B(\ell^2(X;\H_0))$ with rank locally at most one such that $\ppg(T) \leq h(\varepsilon, R, 4N_X(L(\varepsilon)R))$ and $\|T - P\| \leq 4\varepsilon$. 
\end{cor}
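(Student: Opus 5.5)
We reduce the corollary to the scalar case of Theorem~\ref{thm:quantitative version of rank one complex case}. Write $P=P_u$ for a unit vector $u\in\ell^2(X;\H_0)$, so $u=\sum_x \delta_x\otimes u_x$ with $u_x\in\H_0$ and $\sum_x\|u_x\|^2=1$. Define the scalar unit vector $v\in\ell^2(X)$ by $v(x):=\|u_x\|$, and the isometry $V:\ell^2(X)\to\ell^2(X;\H_0)$ by
\[
V\delta_x:=\delta_x\otimes \frac{u_x}{\|u_x\|}
\]
when $u_x\neq 0$. When $u_x=0$, we set $V\delta_x:=\delta_x\otimes e$ for a fixed unit vector $e\in\H_0$. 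Then $V$ commutes with all $\chi_A$, in the sense that $\chi_A V=V\chi_A$. It also satisfies $Vv=u$, and hence $P_u=VP_vV^*$.

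The first step is to check that $P_v$ is $(\varepsilon,R)$-quasi-local on $\ell^2(X)$. For $A,B\subseteq X$ with $d(A,B)>R$ we have
\[
\|\chi_AP_v\chi_B\|=\|v_A\|\,\|v_B\|=\|u_A\|\,\|u_B\|=\|\chi_AP_u\chi_B\|<\varepsilon.
\]
Alternatively, one can use $\chi_AP_v\chi_B=V^*\chi_AP_u\chi_BV$. Either way, Theorem~\ref{thm:quantitative version of rank one complex case} applies. It gives $T_0\in\B(\ell^2(X))$ with $\ppg(T_0)\le h(\varepsilon,R,4N_X(L(\varepsilon)R))$ and $\|T_0-P_v\|\le 4\varepsilon$, with the same functions $L$ and $h$.

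Next set $T:=VT_0V^*$. Then $\|T-P_u\|=\|V(T_0-P_v)V^*\|\le 4\varepsilon$. Its matrix entries are
\[
T_{x,y}=(T_0)_{x,y}\,\theta_x\otimes\overline{\theta_y},
\]
where $\theta_x\in\H_0$ is the unit vector used to define $V\delta_x$. So $T_{x,y}$ is a scalar multiple of the rank-one operator $\xi\mapsto\langle\xi,\theta_y\rangle\theta_x$, and $T$ has rank locally at most one. Moreover $T_{x,y}=0$ whenever $(T_0)_{x,y}=0$, so $\ppg(T)\le\ppg(T_0)$. This yields the corollary.

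There is no serious obstacle. The only points needing care are the bookkeeping for the points where $u_x=0$, which is handled by the arbitrary choice of $\theta_x$, and checking the matrix-entry identity for $VT_0V^*$, which is immediate since $V$ is diagonal with respect to $X$.
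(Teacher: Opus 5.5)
Your proof is correct and follows the paper's argument. Both conjugate by the diagonal isometry $\delta_x\mapsto\delta_x\otimes u_x/\|u_x\|$ to reduce to the uniformised scalar vector $\sum_x\|u_x\|\delta_x$, apply Theorem~\ref{thm:quantitative version of rank one complex case} as a black box, and conjugate back to read off locally rank-one matrix entries, the propagation bound and the norm estimate. The only differences are minor: you check the quasi-locality of $P_v$ directly through $\|v_A\|\,\|v_B\|=\|u_A\|\,\|u_B\|$, and you handle points with $u_x=0$ by an arbitrary unit vector, whereas the paper first reduces to full support and cites earlier lemmas from the literature for both points.
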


\begin{proof}
Let $(X,d)$ be a discrete metric space, $\H_0$ a Hilbert space and a unit vector $u \in \ell^2(X;\H_0)$. Write $u=\sum_{x\in X}\delta_x \otimes u_x$, where $\delta_x$ is the Dirac function at $x$ and $u_x \in \H_0$. Assume that $P_u$ is $(\varepsilon, R)$-quasi-local for some $\varepsilon \in (0,\frac{1}{10})$ and $R>0$.

 Following the proof of \cite[Lemma 6.5]{LSZ23}, we consider the \emph{uniformisation} vector $\tilde{u}$ of $u$ defined by $\tilde{u}:=\sum_{x\in X} \|u_x\| \delta_x \in \ell^2(X)$. Without loss of generality (see \cite[Lemma 4.5, Lemma 4.6]{LSZ23} and their proofs), we assume that $u$ has full support $X$. Define
 \[
   W: \ell^2(X) \longrightarrow \ell^2(X;\H_0) \quad \text{by} \quad \delta_x \mapsto \frac{\delta_x \otimes u_x}{\|u_x\|}\quad \text{for} \quad x\in X.
 \]
 It is routine to check that $W$ is an isometry with $W^*:\ell^2(X;\H_0) \to \ell^2(X)$ given by
 \[
   W^*(\delta_x \otimes \eta) := \frac{\langle \eta, u_x \rangle}{\|u_x\|} \delta_x \quad \text{for} \quad x\in X \quad \text{and} \quad \eta \in \H_0.
 \]
 Then we have $P_u = W P_{\tilde{u}} W^*$ and $P_{\tilde{u}} = W^* P_u W$.

 Following \cite[Lemma 4.3 and Lemma 6.4]{LSZ23}, we obtain that $P_{\tilde{u}}$ is also $(\varepsilon,R)$-quasi-local. Applying Theorem~\ref{thm:quantitative version of rank one complex case}, there exists $\tilde{T} \in \B(\ell^2(X))$ with $\ppg(\tilde{T}) \leq h(\varepsilon, R, 4N_X(L(\varepsilon)R))$ such that $\|\tilde{T} - P_{\tilde{u}}\| \leq 4\varepsilon$. Set $T:=W\tilde{T}W^* \in \B(\ell^2(X;\H_0))$. Since $W$ does not change propagation, it is clear that $\ppg(T) = \ppg(\tilde{T})$. On the other hand, the proof of \cite[Lemma 6.5]{LSZ23} indicates that
 \[
   T_{xy} \xi = \frac{\langle \xi, u_y \rangle \tilde{T}_{xy}}{\|u_x\| \cdot \|u_y\|} \cdot u_x \quad \text{for} \quad x,y \in X \quad \text{and} \quad \xi\in \H_0.
 \]
 Hence each $T_{x,y}$ has rank at most one. Finally, we have
 \[
   \|P_u - T\| = \|WP_{\tilde{u}}W^* - W \tilde{T}W^*\| \leq \|P_{\tilde{u}} - \tilde{T}\| \leq 4\varepsilon,
 \]
 which concludes the proof.
\end{proof}

We now pass to general rank-one contractions.

\begin{thm}\label{thm:quantitative version of rank one general case}
The functions $L$ and $h$ in Theorem~\ref{thm:quantitative version of rank one complex case} further satisfy the following: 
For any discrete metric space $(X,d)$ of bounded geometry, a Hilbert space $\H_0$ and a rank-one contraction $T \in \B(\ell^2(X;\H_0))$ which is $(\varepsilon, R)$-quasi-local for some $\varepsilon \in (0,\frac{1}{30})$ and $R>0$, there exists $S \in \B(\ell^2(X;\H_0))$ with rank locally at most one such that $\ppg(S) \leq h(\varepsilon, 2R, 4N_X(2L(3\varepsilon)R))$ and $\|S - T\| \leq 30\varepsilon$. 
\end{thm}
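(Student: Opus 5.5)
Write the rank-one contraction as $T = \lambda\, v\otimes u^*$, i.e.\ $T\xi = \lambda\langle \xi,u\rangle v$, with unit vectors $u,v$ and $0<\lambda\le 1$. The plan is to reduce to Corollary~\ref{cor:quantitative version of rank one}, applied to a single rank-one projection built from both $u$ and $v$.

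\textbf{Small $\lambda$.} If $\lambda\le 30\varepsilon$, take $S=0$.

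\textbf{Large $\lambda$: quasi-locality.} Assume $\lambda>30\varepsilon$. For $A,B$ with $d(A,B)>R$ we have
\[
\|\chi_A T\chi_B\| = \lambda\|v_A\|\,\|u_B\| < \varepsilon .
\]
Thus $\|v_A\|\,\|u_B\|<\varepsilon/\lambda<1/30$. However, we do not directly get quasi-locality of $P_u$ or $P_v$ separately. Consider the unit vector
\[
w := \frac{u+v}{\|u+v\|}
\]
(with phases adjusted), or more robustly the vector $\tilde w\in\ell^2(X;\H_0\oplus\H_0)$ given by $\tilde w = (u\oplus v)/\sqrt2$. The latter is cleaner: identify $\ell^2(X;\H_0\oplus \H_0)=\ell^2(X;\H_0)\oplus\ell^2(X;\H_0)$. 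Then
\[
\|\chi_A P_{\tilde w}\chi_B\| = \tfrac12\|\tilde w_A\|\|\tilde w_B\|\cdot 2 = \|\tilde w_A\|\,\|\tilde w_B\|,
\]
where $\|\tilde w_A\|^2=\tfrac12(\|u_A\|^2+\|v_A\|^2)$. The cross terms $\|v_A\|\|u_B\|$ are controlled by the hypothesis, but the diagonal terms $\|u_A\|\|u_B\|$ and $\|v_A\|\|v_B\|$ are not directly controlled. This is the main obstacle. I would handle it with a Lemma~\ref{lem:large portion of norm}-type argument: since $\|u\|=1$, for any $C$ either $\|u_{\N_R(C)}\|$ or $\|u_{X\setminus \N_R(C)}\|$ is at least $1/\sqrt2$. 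Hence $\|v_C\|\cdot(1/\sqrt2)<\varepsilon/\lambda$ whenever $C$ lies at distance $>R$ from a large portion of $u$. Splitting as in that lemma with distance $2R$ should give that $P_v$ (and symmetrically $P_u$) is $(c\,\varepsilon/\lambda,2R)$-quasi-local for an absolute constant $c$ (roughly $\sqrt2$ or $2$). Then $P_{\tilde w}$ is about $(3\varepsilon/\lambda, 2R)$-quasi-local. The aim is to land exactly on parameter $3\varepsilon$ at scale $2R$, matching the bound $h(\varepsilon,2R,4N_X(2L(3\varepsilon)R))$ via the monotonicity of $L$ and $h$. Since $\lambda$ may be small, I would rescale the threshold, or else use the direct estimate $\lambda\|u_A\|\|u_B\|\le$ (cross terms) to obtain $3\varepsilon$ rather than $3\varepsilon/\lambda$; which route works needs checking. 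Also $3\varepsilon<1/10$ because $\varepsilon<1/30$.

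\textbf{Approximation and transfer back.} Apply Corollary~\ref{cor:quantitative version of rank one} to $P_{\tilde w}$, with Hilbert space $\H_0\oplus\H_0$. This gives $\tilde S$ with rank locally at most one, the required propagation, and $\|\tilde S-P_{\tilde w}\|\le 12\varepsilon$. Now
\[
T = 2\lambda\, J_2^*P_{\tilde w}J_1,
\]
where $J_1,J_2$ are the isometric inclusions of the two summands. Set $S:=2\lambda J_2^*\tilde SJ_1$. Compressing by $J_i$ preserves propagation and local rank at most one, and
\[
\|S-T\|\le 2\cdot 12\varepsilon = 24\varepsilon \le 30\varepsilon.
\]
The slack in the constant $30$ absorbs the loss from the quasi-locality estimate.
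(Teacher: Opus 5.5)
Your construction is the paper's. The projection onto $\tilde w=(u\oplus v)/\sqrt2$ is exactly $G=\frac12\begin{pmatrix}P&V^*\\ V&Q\end{pmatrix}$ for the partial isometry $V=T/\lambda=v\otimes u^*$, where $P=V^*V=P_u$ and $Q=VV^*=P_v$. The $30\varepsilon$ threshold and the corner compression $S=2\lambda J_2^*\tilde S J_1$ are also the same. The one genuine difference is how you control the diagonal blocks. Your Lemma~\ref{lem:large portion of norm}-style splitting (using $\|u\|=\|v\|=1$ and $d(\N_R(C),D)>R$) is sound and gives $(\sqrt2\,\varepsilon/\lambda,2R)$-quasi-locality of $P_u,P_v$. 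The paper instead applies Lemma~\ref{lem:composition of quasi-lcoal} to $V^*V$ and $VV^*$ and gets $2\varepsilon/\lambda$. Either way $P_{\tilde w}$ is $(3\varepsilon/\lambda,2R)$-quasi-local.

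The step that fails as written is the last one, where you use $\|\tilde S-P_{\tilde w}\|\le 12\varepsilon$. That would require $P_{\tilde w}$ to be $(3\varepsilon,2R)$-quasi-local, and your alternative route to that cannot work. $P_{\tilde w}$ depends only on $u,v$, not on $\lambda$. The hypothesis controls $u,v$ only through $\lambda\|v_A\|\|u_B\|<\varepsilon$, i.e.\ at level $\varepsilon/\lambda$, which for $\lambda$ just above $30\varepsilon$ is about $1/30$. For a concrete failure, take two points at distance $>2R$, $u=v$ with $|u_a||u_b|=1/31$, and $\lambda\in(30\varepsilon,31\varepsilon)$; then $\|\chi_aP_{\tilde w}\chi_b\|=1/31$, which is not below $3\varepsilon$ once $\varepsilon\le 1/93$.

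The correct route is your other option, which is what the paper does. Since $\lambda>30\varepsilon$, we have $3\varepsilon/\lambda<1/10$, so Corollary~\ref{cor:quantitative version of rank one} applies. It gives $\|\tilde S-P_{\tilde w}\|\le 12\varepsilon/\lambda$ with $\ppg(\tilde S)\le h(3\varepsilon/\lambda,2R,4N_X(2L(3\varepsilon/\lambda)R))$. The factor $2\lambda$ cancels the $1/\lambda$, giving $\|S-T\|\le 24\varepsilon$. Finally, $3\varepsilon/\lambda\ge 3\varepsilon\ge\varepsilon$, so monotonicity of $L$ (non-increasing), of $h$ (non-increasing in the first variable, non-decreasing in the third) and of $N_X$ bounds the propagation by $h(\varepsilon,2R,4N_X(2L(3\varepsilon)R))$.
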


\begin{proof}
First, assume that $T \neq 0$ is a partial isometry. Then $P:=T^*T$ and $Q:=TT^*$ are projections, both of which are $(2\varepsilon, 2R)$-quasi-local due to Lemma~\ref{lem:composition of quasi-lcoal}. Consider the operator $G$ on $\ell^2(X;\H_0\oplus \H_0)$, defined by
\[
G:=\frac{1}{2}
\begin{pmatrix}
P&T^*\\
T&Q
\end{pmatrix}.
\]
This is a rank-one projection and $(3\varepsilon, 2R)$-quasi-local by direct calculations. Then Corollary~\ref{cor:quantitative version of rank one} shows that there exists $F \in \B(\ell^2(X;\H_0 \oplus \H_0))$ with rank locally at most one such that $\ppg(F) \leq h(3\varepsilon, 2R, 4N_X(2L(3\varepsilon)R))$ and $\|G-F\| \leq 12\varepsilon$. Let $S'$ be the lower-left corner of $F$ and set $S:=2S'$. Then $\ppg(S) \leq \ppg(F)$ and $\|S-T\| \leq 2\|G-F\| \leq 24\varepsilon$. 

Now consider a general contraction $T$. If $T=0$, the result holds trivially. So assume $T \neq 0$. Write $T = \lambda V$ for the rank-one partial isometry $V \in \B(\ell^2(X;\H_0))$ and $\lambda:=\|T\| \in (0,1]$. If $\lambda \leq 30\varepsilon$, simply take $S=0$. Otherwise, $V=\lambda^{-1}T$ is $(\frac{\varepsilon}{\lambda},R)$-quasi-local and $\frac{\varepsilon}{\lambda} < \frac{1}{30}$. The partial isometry case provides $F \in \B(\ell^2(X;\H_0))$ with 
\[
\ppg(F) \leq h\left(\frac{3\varepsilon}{\lambda}, 2R, 4N_X\left(2L\left(\frac{3\varepsilon}{\lambda}\right) R\right) \right)
\]
and $\|F-V\| \leq \frac{24\varepsilon}{\lambda}$.
Set $S:=\lambda F$. Then $\ppg(S) = \ppg(F)$ and we have
\[
\|S-T\| \leq \lambda \cdot \frac{24\varepsilon}{\lambda} = 24\varepsilon.
\]
Since $\frac{3\varepsilon}{\lambda} \geq 3\varepsilon \geq \varepsilon$, we have $L\left(\frac{3\varepsilon}{\lambda}\right) \leq L(3\varepsilon)$ and hence,
\[
\ppg(F) \leq h\left(\frac{3\varepsilon}{\lambda}, 2R, 4N_X\left(2L\left(\frac{3\varepsilon}{\lambda}\right) R\right) \right) \leq h(\varepsilon, 2R, 4N_X(2L(3\varepsilon)R)).
\]
Therefore, we conclude the proof.
\end{proof}

Based on Theorem~\ref{thm:quantitative version of rank one general case}, we prove Theorem~\ref{thm:quantitative version of rank one intro} as follows.

\begin{proof}[Proof of Theorem~\ref{thm:quantitative version of rank one intro}]
Since $T=(\mathrm{SOT})\text{-}\sum_{n\in\NN}T_n$ is $(\varepsilon,R)$-quasi-local, so is each $T_n$. 
Applying Theorem~\ref{thm:quantitative version of rank one general case} to each $T_n$, 
we obtain $S_n\in\B(\ell^2(X_n;\H_0))$ such that $\|T_n-S_n\|\leq 30\varepsilon$ and $\ppg(S_n)\leq h(\varepsilon,2R, 4N_{X_n}(2L(3\varepsilon)R))$.
Since each $S_n$ has rank locally at most one, $S:=(\mathrm{SOT})\text{-}\sum_{n\in\mathbb N}S_n$ is locally compact. Moreover, the uniformly bounded geometry of $\{X_n\}_{n\in\mathbb N}$ implies $S$ has finite propagation. Moreover, we have
\[
\|T-S\|
=
\sup_{n\in\mathbb N}\|T_n-S_n\|
\leq 30\varepsilon.
\]
This completes the proof.
\end{proof}

Finally, we provide a quantitative proof of Theorem~\ref{thm:Thm A in LZZ26}.

\begin{proof}[Proof of Theorem~\ref{thm:Thm A in LZZ26}]
Since $C^*(X;H_0)\subseteq C_q^*(X;H_0)$, it suffices to prove the reverse implication. 
By scaling, we assume that $T$ is a quasi-local contraction. Given $\delta>0$, we choose $\varepsilon>0$ such that $\varepsilon<s_M$, $A_M\varepsilon^{\alpha_M}<\frac{1}{30}$ and $30M A_M\varepsilon^{\alpha_M}<\delta$.

Since $T$ is quasi-local, there exists $R>0$ such that $T$ is $(\varepsilon,R)$-quasi-local. Theorem~\ref{thm:quantitative decomposition intro} provides a decomposition $T=T^{(1)}+\cdots+T^{(M)}$, where each $T^{(j)}$ has block-rank at most one and is $(A_M\varepsilon^{\alpha_M},B_MR)$-quasi-local. After discarding the zero blocks, we may assume that each \(T^{(j)}\) has block-rank one. Applying Theorem~\ref{thm:quantitative version of rank one intro} to each $T^{(j)}$, we obtain finite-propagation locally
compact operators $S^{(j)}$ such that $\|T^{(j)}-S^{(j)}\|
\leq
30A_M\varepsilon^{\alpha_M}$.
Set $S:=S^{(1)}+\cdots+S^{(M)}$. Then $S$ is locally compact with finite propagation and 
\[
\|T-S\|
\leq
30M A_M\varepsilon^{\alpha_M}
<\delta.
\]
Therefore, $T\in C^*(X;H_0)$.
\end{proof}

\section{Proof of Theorem~\ref{thm:quantitative version of rank one complex case}}\label{sec:proof of complex case rank one}

Here we prove Theorem~\ref{thm:quantitative version of rank one complex case}, following the outline of \cite[Theorem 6.1]{LSZ23} and \cite[Theorem 5.3]{LWZ25}. The idea is to translate the quasi-locality of a rank-one projection to the notion of measured asymptotic expanders introduced in \cite{LNSZ21}, and then apply structural results to approximate measured asymptotic expanders by measured expanders. Since we need a quantitative version, we have to trace parameters.

Let us fix a discrete metric space $(X,d)$ of bounded geometry and a unit vector $u\in \ell^2(X)$. Assume that $P_u$ is $(\varepsilon, R)$-quasi-local for some $\varepsilon \in (0,\frac{1}{10})$ and $R>0$.

\subsection{Quantitative expansion from quasi-locality}

Following \cite[Lemma 4.5 and Lemma 4.6]{LSZ23}, we assume that $u$ has full support $X$ without loss of generality. Define a probability measure $\mu$ on $X$ by $\mu(x):=|u(x)|^2$ for $x\in X$ and the following operator
\[
 U: \ell^2(X,\mu) \longrightarrow \ell^2(X) \quad \text{by} \quad (U\xi)(x):=u(x)\xi(x) \quad \text{for} \quad \xi \in \ell^2(X,\mu) \text{ and } x\in X.
\]
It is routine to check (see, \emph{e.g.}, \cite[Lemma 5.5]{LWZ25}) that $U$ is unitary and
\[
P:=U^*P_uU \in \B(\ell^2(X,\mu))
\]
is the orthogonal projection onto constant functions on $X$. Since $U$ has propagation zero, it is clear that $P \in \B(\ell^2(X,\mu))$ is also $(\varepsilon,R)$-quasi-local by Lemma~\ref{lem:composition of quasi-lcoal}.

\begin{lem}\label{lem:app MAE 1}
  For any $A \subseteq X$ with $4\varepsilon^2 \leq \mu(A) \leq \frac{1}{2}$, we have $\mu(\partial_R(A)) > \frac{1}{2} \mu(A)$.
\end{lem}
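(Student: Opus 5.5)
Set $B:=X\setminus \N_R(A)$, so that $d(A,B)>R$ and $X=A\sqcup \partial_R(A)\sqcup B$. The plan is to apply quasi-locality of $P$ to the pair $(A,B)$ and then argue by contradiction. Suppose $\mu(\partial_R(A))\le \frac12\mu(A)$; we will show this forces $\mu(A)<4\varepsilon^2$.

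First we compute $\|\chi_A P\chi_B\|$ in $\ell^2(X,\mu)$. The operator $P$ is the rank-one projection onto the constant function $\mathbf 1$, which is a unit vector because $\mu$ is a probability measure. Hence $P=\langle\cdot,\mathbf 1\rangle_\mu \mathbf 1$ and $\chi_AP\chi_B=\langle\cdot,\chi_B\rangle_\mu\chi_A$. Its norm is therefore
\[
\|\chi_A\|_\mu\|\chi_B\|_\mu=\sqrt{\mu(A)\mu(B)}.
\]
Since $P$ is $(\varepsilon,R)$-quasi-local and $d(A,B)>R$, this gives $\mu(A)\mu(B)<\varepsilon^2$.

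Next we bound $\mu(B)$ from below. We have $\mu(B)=1-\mu(A)-\mu(\partial_R(A))$. Under the contradiction hypothesis and $\mu(A)\le\frac12$, this yields
\[
\mu(B)\ge 1-\tfrac32\mu(A)\ge 1-\tfrac34=\tfrac14.
\]
Then $\mu(A)<4\varepsilon^2$, which contradicts $\mu(A)\ge 4\varepsilon^2$. Hence $\mu(\partial_R(A))>\frac12\mu(A)$.

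No step is a real obstacle. The only points needing care are identifying the norm of the rank-one compression in the weighted space $\ell^2(X,\mu)$, and noting that quasi-locality transfers from $P_u$ to $P$ because $U$ is unitary with propagation zero, which the paper has already recorded. The boundary $\partial_R(A)$ is exactly $\N_R(A)\setminus A$, so the three sets above partition $X$. One could also avoid the contradiction and argue directly: if $\mu(\partial_R(A))\le\frac12\mu(A)$, then $\varepsilon^2>\mu(A)(1-\frac32\mu(A))\ge\frac14\mu(A)$.
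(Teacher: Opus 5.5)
Your proposal is correct and is essentially the paper's argument. Both apply quasi-locality of $P$ to the pair $(A, X\setminus\N_R(A))$, use $\|\chi_A P\chi_B\|^2=\mu(A)\mu(B)$, and combine this with $4\varepsilon^2\le\mu(A)\le\frac12$; the paper simply runs it directly, deducing $\mu(X\setminus\N_R(A))<\frac14$ and hence $\mu(\N_R(A))>\frac34\ge\frac32\mu(A)$, rather than by contradiction.
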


\begin{proof}
  For such $A \subseteq X$, \cite[Lemma 4.6]{LWZ25} shows that
  \[
   \mu(A) \cdot \mu(X \setminus \N_R(A)) = \|\chi_A P \chi_{X \setminus \N_R(A)}\|^2 < \varepsilon^2,
  \]
  which implies that $\mu(X \setminus \N_R(A)) < \frac{1}{4}$. Hence $\mu(\N_R(A)) > \frac{3}{4} \geq (1+\frac{1}{2}) \cdot \mu(A)$.
\end{proof}

Furthermore, we have:

\begin{lem}\label{lem:app MAE 2}
  For any $A \subseteq X$ with $4\varepsilon^2 \leq \mu(A) \leq \frac{7}{8}$, we have $\mu(\partial_R(A))> \frac{1}{28}\mu(A)$.
\end{lem}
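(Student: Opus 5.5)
We prove Lemma~\ref{lem:app MAE 2} by splitting into the two ranges $4\varepsilon^2 \leq \mu(A) \leq \frac12$ and $\frac12 < \mu(A) \leq \frac78$. The first range is covered by Lemma~\ref{lem:app MAE 1}, since $\frac12\mu(A) > \frac{1}{28}\mu(A)$. So the work is in the second range, where we pass to a small subset of $X\setminus\N_R(A)$ and use the same product identity as before, namely $\mu(E)\cdot\mu(F) = \|\chi_E P \chi_F\|^2 < \varepsilon^2$ whenever $d(E,F)>R$ (\cite[Lemma 4.6]{LWZ25}).

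Assume $\frac12 < \mu(A) \leq \frac78$ and set $B := X \setminus \N_R(A)$. Then $\mu(\partial_R(A)) = \mu(\N_R(A)) - \mu(A) = 1 - \mu(A) - \mu(B)$, so it suffices to show $\mu(B)$ is small. We have $d(A,B) > R$, since any $y$ with $d(y,A)\leq R$ lies in $\N_R(A)$. Hence $\mu(A)\mu(B) < \varepsilon^2$, and with $\mu(A) > \frac12$ this gives $\mu(B) < 2\varepsilon^2$. Since $1-\mu(A) \geq \frac18$ and $\varepsilon < \frac1{10}$, we get $2\varepsilon^2 < \frac{2}{100} < \frac{1}{8}$. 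Therefore
\[
\mu(\partial_R(A)) = 1-\mu(A)-\mu(B) > \tfrac18 - \tfrac{2}{100} > \tfrac{1}{28} \geq \tfrac{1}{28}\mu(A).
\]
Indeed $\tfrac18 - \tfrac1{50} = 0.105 > \tfrac1{28}\approx 0.0357$.

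The only real content is the product identity $\mu(A)\mu(B) = \|\chi_AP\chi_B\|^2$ combined with $(\varepsilon,R)$-quasi-locality, and this is already available from the proof of Lemma~\ref{lem:app MAE 1}. I do not expect an obstacle; the main check is the arithmetic with $\varepsilon<\frac1{10}$, which leaves ample room. The constant $\frac1{28}$ is far from sharp, which suggests the authors may have had a longer argument in mind, for instance applying Lemma~\ref{lem:app MAE 1} to the complement. The direct argument above already suffices.
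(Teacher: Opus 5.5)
Your proof is correct, and it takes a shorter route than the paper's. Both arguments handle $\mu(A)\le\frac12$ via Lemma~\ref{lem:app MAE 1} and set $B:=X\setminus\N_R(A)$ when $\frac12<\mu(A)\le\frac78$.

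The paper then uses only the crude bound $\mu(B)\le\mu(X\setminus A)<\frac12$ and splits into two subcases:
\begin{itemize}
\item if $\mu(B)<\frac1{16}$, then $\mu(\N_R(A))>\frac{15}{16}\ge(1+\frac1{14})\mu(A)$;
\item if $\mu(B)\ge\frac1{16}$, it applies Lemma~\ref{lem:app MAE 1} to $B$ and uses $\partial_R(B)\subseteq\partial_R(A)$ to get $\mu(\partial_R(A))>\frac12\mu(B)\ge\frac1{32}\ge\frac1{28}\mu(A)$.
\end{itemize}
The second subcase is exactly the complement argument you anticipated.

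You instead go back to the identity $\mu(A)\mu(B)=\|\chi_AP\chi_B\|^2<\varepsilon^2$ for the pair $(A,B)$. Together with $\mu(A)>\frac12$, this forces $\mu(B)<2\varepsilon^2<\frac1{50}$. So the paper's second subcase can never actually occur, and you get the stronger bound $\mu(\partial_R(A))>\frac18-2\varepsilon^2$.

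What the paper's route buys is that it uses Lemma~\ref{lem:app MAE 1} only as a black box. It is a purely combinatorial step: expansion of sets with measure in $[4\varepsilon^2,\frac12]$ propagates to sets with measure up to $\frac78$, regardless of where that expansion came from. Your route re-invokes the operator-theoretic input, which is more direct here and gives a better constant.
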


\begin{proof}
  If $4\varepsilon^2 \leq \mu(A) \leq \frac{1}{2}$, Lemma~\ref{lem:app MAE 1} implies the result. If $\frac{1}{2} < \mu(A) \leq \frac{7}{8}$, set $B:=X \setminus \N_R(A)$. Then $\mu(B) \leq \mu(X \setminus A) < \frac{1}{2}$. We further divide into two cases.

  If $\mu(B) < \frac{1}{16}$, we have $\mu(\N_R(A)) > 1-\frac{1}{16} \geq (1+\frac{1}{14})\mu(A)$. If $\mu(B) \geq \frac{1}{16}$, Lemma~\ref{lem:app MAE 1} implies $\mu(\partial_R(B)) > \frac{1}{2}\mu(B)$. Since $\partial_R(B) \subseteq \partial_R(A)$, then $\mu(\partial_R(A)) > \frac{1}{2}\mu(B) \geq \frac{1}{28}\mu(A)$.
\end{proof}

Take $L \in \NN$ such that $(1+\frac{1}{28})^L \geq \frac{1}{4\varepsilon^2}$. We have the following:

\begin{lem}\label{lem:app MAE 3 domain}
  Given $Y \subseteq X$ with $\mu(Y) \geq \frac{1}{2}$ and $A \subseteq Y$ with $8\varepsilon^2\mu(Y) \leq \mu(A) \leq \frac{1}{2}\mu(Y)$, we have $\mu\left(\partial_{LR}^Y(A)\right) > \frac{1}{2} \mu(A)$.
\end{lem}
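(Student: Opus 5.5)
The plan is to grow $A$ by successive $R$-neighbourhoods in the whole space $X$, using the expansion of Lemma~\ref{lem:app MAE 2} at each step, until the neighbourhood captures almost all of the $\mu$-mass, and only then intersect with $Y$.

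\textbf{Step 1: iterate the neighbourhoods.} Put $A_k:=\N_{kR}(A)$ for $k=0,1,\ldots,L$. Then $A_{k+1}\supseteq \N_R(A_k)$, so that
\[
\mu(A_{k+1})\geq \mu(\N_R(A_k))=\mu(A_k)+\mu(\partial_R(A_k)).
\]
We also have $\mu(A_0)=\mu(A)\geq 8\varepsilon^2\mu(Y)\geq 4\varepsilon^2$, using $\mu(Y)\geq \frac12$. Since the $A_k$ are increasing, every $A_k$ has measure at least $4\varepsilon^2$. Hence whenever $\mu(A_k)\leq \frac78$, Lemma~\ref{lem:app MAE 2} applies and gives
\[
\mu(A_{k+1})>\left(1+\tfrac1{28}\right)\mu(A_k).
\]

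\textbf{Step 2: some neighbourhood has measure above $\frac78$.} Suppose $\mu(A_k)\leq\frac78$ for all $k=0,\ldots,L-1$. Iterating Step 1 gives
\[
\mu(A_L)>\left(1+\tfrac1{28}\right)^L\mu(A)\geq \tfrac{1}{4\varepsilon^2}\cdot 4\varepsilon^2=1,
\]
by the choice of $L$. This is impossible since $\mu$ is a probability measure. So some $A_k$ with $k\leq L$ has measure greater than $\frac78$. By monotonicity, $\mu(\N_{LR}(A))>\frac78$, that is, $\mu(X\setminus \N_{LR}(A))<\frac18$.

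\textbf{Step 3: restrict to $Y$.} Since $\mu(Y)\geq\frac12$, we have $\frac18\leq\frac14\mu(Y)$. Therefore
\[
\mu(\N_{LR}(A)\cap Y)\geq \mu(Y)-\mu(X\setminus\N_{LR}(A))>\tfrac34\mu(Y).
\]
Because $A\subseteq Y$, the relative boundary is $\partial^Y_{LR}(A)=(\N_{LR}(A)\cap Y)\setminus A$. Hence
\[
\mu(\partial^Y_{LR}(A))>\tfrac34\mu(Y)-\mu(A)\geq \tfrac34\mu(Y)-\tfrac12\mu(Y)=\tfrac14\mu(Y)\geq\tfrac12\mu(A),
\]
where the last two inequalities both use $\mu(A)\leq\frac12\mu(Y)$.

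\textbf{Main difficulty.} The only delicate point is Step 2. One has to make sure that the hypothesis of Lemma~\ref{lem:app MAE 2} holds at every stage: the lower bound $4\varepsilon^2$ is inherited by monotonicity, and the upper bound $\frac78$ is exactly what the contradiction assumption provides. One also has to check that the strict inequality, combined with the choice of $L$, genuinely exceeds total mass $1$. The remaining steps are bookkeeping.
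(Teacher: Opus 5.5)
Your proof is correct and takes essentially the same route as the paper: iterate Lemma~\ref{lem:app MAE 2} along the neighbourhoods $\N_{kR}(A)$, use the choice of $L$ to force $\N_{LR}(A)$ to carry almost all of the mass, and then intersect with $Y$. The only cosmetic difference is that the paper argues by contradiction with the threshold $1-\frac14\mu(Y)$ (which is at most $\frac78$), whereas you use $\frac78$ directly and then convert it via $\frac18\le\frac14\mu(Y)$.
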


\begin{proof}
  For such $Y$ and $A$, we claim that $\mu(\N_{LR}(A)) > 1-\frac{1}{4}\mu(Y)$. If not, we have
  \[
  4\varepsilon^2 \leq 8\varepsilon^2 \mu(Y) \leq  \mu(A) \leq \mu(\N_{iR}(A)) \leq 1-\frac{1}{4}\mu(Y) \leq \frac{7}{8} \quad \text{for} \quad i=1,2,\cdots, L.
  \]
  Applying Lemma~\ref{lem:app MAE 2} to $\N_{iR}(A)$ for $i=0,1,\cdots,L-1$, we obtain
  \[
   \mu(\N_{LR}(A)) > \left(1+\frac{1}{28}\right)^L \cdot \mu(A) \geq \left(1+\frac{1}{28}\right)^L \cdot 4\varepsilon^2 \geq 1,
  \]
  which leads to a contradiction. Hence we have $\mu(\N_{LR}(A)) > 1-\frac{1}{4}\mu(Y)$ and then,
  \[
   \mu\left(\N_{LR}(A) \cap Y \right) > 1-\frac{1}{4}\mu(Y) - \mu(X \setminus Y) = \frac{3}{4}\mu(Y) \geq \left(1+\frac{1}{2}\right)\mu(A).
  \]
  This concludes the proof.
\end{proof}

Recall a basic fact from coarse geometry (see, \emph{e.g.}, the proof of \cite[Lemma 12.2.3]{WY20}) that the set $\left\{(x,y) \in X: d(x,y) \leq LR\right\}$ can be written as a union of at most $2N_X(LR)$ many partial translations. Taking their inverses as well, we can write

\begin{equation}\label{EQ:app decomposition}
  \left\{(x,y) \in X: d(x,y) \leq LR\right\} = \bigcup_{i=1}^{N} K_i \quad \text{for} \quad N:=4N_X(LR),
\end{equation}
where $K_i$ is a partial translation so that for each $i$ there exists $j$ such that $K_i^{-1} = K_j$. This symmetry will ensure the reversibility of the Markov kernel constructed below.
Each $K_i$ corresponds to a bijection $\tau_i: D_i \to R_i$, whose graph coincides with $K_i$. Also set $\R_i:D_i \to (0,\infty)$ by $\R_i(x):=\frac{\mu(\tau_i(x))}{\mu(x)}$ for $x\in D_i$.

\begin{lem}\label{lem:app domain of expansion}
 There exists $Y \subseteq X$ with $\mu(Y) > (1-8\varepsilon^2)(1-\varepsilon^2)$ such that for any $A \subseteq Y$ with $0< \mu(A) \leq \frac{1}{2}\mu(Y)$, we have $\mu\left(\partial_{LR}^Y(A)\right) > \frac{1}{4}\mu(A)$. Moreover, we can require $\frac{\varepsilon^2}{N} \leq \R_i(x) \leq \frac{N}{\varepsilon^2}$ for any $i=1,\cdots, N$ and $x\in D_i \cap Y \cap \tau_i^{-1}(Y)$.
\end{lem}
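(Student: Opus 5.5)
The construction of $Y$ has two stages. First we delete the points where $\mu$ jumps too much along some $\tau_i$. Then we delete, by an exhaustion argument, a small set that fails to expand. Lemma~\ref{lem:app MAE 3 domain} is the tool that keeps the total deleted mass below $8\varepsilon^2\mu(Y_0)$.

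\emph{Step 1: the ratio condition.} Let
\[
Z:=\{x\in X:\ \exists\, i \text{ with } x\in D_i \text{ and } \mu(x)<\tfrac{\varepsilon^2}{N}\mu(\tau_i(x))\}.
\]
Each $\tau_i$ is injective, so
\[
\mu(Z)\le \sum_{i=1}^N\sum_{x\in D_i}\tfrac{\varepsilon^2}{N}\mu(\tau_i(x))\le \varepsilon^2 .
\]
Put $Y_0:=X\setminus Z$, so $\mu(Y_0)\ge 1-\varepsilon^2$.
\begin{itemize}
\item For $x\in D_i\cap Y_0$ we have $\R_i(x)\le N/\varepsilon^2$ directly from the definition of $Z$.
\item Now let $x,\tau_i(x)\in Y_0$. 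Choose $j$ with $K_j=K_i^{-1}$, which the symmetry in \eqref{EQ:app decomposition} allows. Then $y:=\tau_i(x)\in D_j$ and $\tau_j(y)=x$. Since $y\notin Z$, we get $\mu(y)\ge \frac{\varepsilon^2}{N}\mu(x)$, that is, $\R_i(x)\ge \varepsilon^2/N$.
\end{itemize}
Hence the ratio bounds hold on any subset $Y\subseteq Y_0$, which is the \emph{moreover} part.

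\emph{Step 2: a key observation.} Call $A\subseteq Y'$ \emph{bad for $Y'$} if $0<\mu(A)\le\frac12\mu(Y')$ and $\mu(\partial^{Y'}_{LR}(A))\le\frac14\mu(A)$. Suppose $\mu(Y')\ge\frac12$. Then Lemma~\ref{lem:app MAE 3 domain} forces every bad set to satisfy $\mu(A)<8\varepsilon^2\mu(Y')$; otherwise it would give boundary measure $>\frac12\mu(A)$.

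\emph{Step 3: a maximal non-expanding set.} Consider the family $\mathcal F$ of sets $B\subseteq Y_0$ with
\[
\mu(B)<8\varepsilon^2\mu(Y_0)\quad\text{and}\quad \mu(\partial^{Y_0}_{LR}(B))\le\tfrac14\mu(B),
\]
including $B=\emptyset$.

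We want a maximal element of $\mathcal F$, or at least one that cannot be enlarged by a bad set. I would get it by a greedy procedure. At each stage, adjoin a bad set for $Y_0\setminus B_n$ whose measure is at least half the supremum of the available measures. Then pass to the increasing union $B_\infty$. The following facts make this work.
\begin{itemize}
\item For increasing unions, any point of $\partial^{Y_0}_{LR}(\bigcup_n B_n)$ lies in $\partial^{Y_0}_{LR}(B_n)$ for all large $n$. Fatou's lemma then gives that the boundary inequality passes to the limit.
\item Boundaries are subadditive: $\partial^{Y_0}_{LR}(B\cup A)\subseteq \partial^{Y_0}_{LR}(B)\cup\partial^{Y_0\setminus B}_{LR}(A)$. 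So adjoining a bad set $A$ for $Y_0\setminus B$ preserves the inequality $\mu(\partial)\le\frac14\mu$.
\item The measure bound $\mu(B)<8\varepsilon^2\mu(Y_0)$ is preserved, as follows. Step 2 gives $\mu(A)<8\varepsilon^2\mu(Y_0)$, since $\mu(Y_0\setminus B)\ge\frac12$. Hence $\mu(B\cup A)<16\varepsilon^2\mu(Y_0)\le\frac12\mu(Y_0)$, using $\varepsilon<\frac1{10}$. If $\mu(B\cup A)\ge 8\varepsilon^2\mu(Y_0)$, Lemma~\ref{lem:app MAE 3 domain} applied to $Y_0$ and $B\cup A$ would contradict the boundary inequality.
\item Because the chosen measures are summable, the supremum of available bad-set measures tends to $0$. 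Hence $B_\infty$ admits no bad set in its complement.
\end{itemize}

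\emph{Step 4: conclusion.} Set $Y:=Y_0\setminus B_\infty$. Then
\[
\mu(Y)>(1-8\varepsilon^2)\mu(Y_0)\ge(1-8\varepsilon^2)(1-\varepsilon^2).
\]
No $A\subseteq Y$ is bad for $Y$, which is exactly the expansion statement. The ratio bounds hold by Step 1.

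The main obstacle is Step 3: justifying the exhaustion when $X$ is infinite, and checking that the measure bound survives each enlargement. The latter is handled by the a priori bound $\mu(A)<8\varepsilon^2\mu(Y')$ from Step 2, which prevents a single bad set from overshooting past $\frac12\mu(Y_0)$.
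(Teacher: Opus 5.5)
\paragraph{The gap.} Steps 1 and 2 are correct, and so are the three preservation facts in Step 3: subadditivity of relative boundaries, Fatou for increasing unions, and the measure cap via Lemma~\ref{lem:app MAE 3 domain}. The gap is the last bullet of Step 3.

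Summability of the $\mu(A_n)$ does give $s_n\to 0$, where $s_n$ is the supremum of the measures of bad sets for $Y_0\setminus B_n$. But a set $A\subseteq Y_0\setminus B_\infty$ that is bad for $Y_0\setminus B_\infty$ need not be bad for any $Y_0\setminus B_n$. Indeed $\partial^{Y_0\setminus B_n}_{LR}(A)=\partial^{Y_0\setminus B_\infty}_{LR}(A)\sqcup\bigl(\partial_{LR}(A)\cap(B_\infty\setminus B_n)\bigr)$, and the second piece is nonempty whenever $A$ lies within $LR$ of a later piece $A_m$ with $m\ge n$. Relative boundaries shrink when the ambient set shrinks, so a set can become bad only after the $A_m$ have been removed. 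Hence your ``Hence'' does not follow as written.

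There is also a smaller omission. Passing to the limit only gives $\mu(B_\infty)\le 8\varepsilon^2\mu(Y_0)$, but you need the strict bound to get $\mu(Y)>(1-8\varepsilon^2)(1-\varepsilon^2)$. This follows by applying Lemma~\ref{lem:app MAE 3 domain} once more to $B_\infty\subseteq Y_0$: equality would force $\frac12$-expansion, contradicting your $\frac14$ bound.

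\paragraph{How to close it.} There are two ways.
\begin{enumerate}
\item[(a)] Use your subadditivity inclusion and induct on $m$: the set $C_m:=A_n\sqcup\cdots\sqcup A_m$ satisfies $\mu(\partial^{Y_0\setminus B_n}_{LR}(C_m))\le\frac14\mu(C_m)$. By Fatou, so does $B_\infty\setminus B_n$. Then $A\sqcup(B_\infty\setminus B_n)$ is bad for $Y_0\setminus B_n$, since its measure is $<16\varepsilon^2\mu(Y_0)\le\frac12\mu(Y_0\setminus B_n)$. So $s_n\ge\mu(A)$ for every $n$, contradicting $s_n\to 0$.
\item[(b)] More simply, drop the greedy scheme and apply Zorn's lemma to $\mathcal F$ directly. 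Your Fatou bullet plus the strictness remark show that unions of chains stay in $\mathcal F$. Your second and third bullets show that a maximal $B$ admits no bad set in $Y_0\setminus B$.
\end{enumerate}

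\paragraph{Comparison with the paper.} Route (b) is essentially what the paper does, but with a different family. It takes $F\subseteq X\setminus Z$ maximal with $\mu(F)\le\frac12\mu(X\setminus Z)$ and $\mu(\partial^{X\setminus Z}_{LR}(F))\le\frac12\mu(F)$, and bounds $\mu(F)$ using Lemma~\ref{lem:app MAE 3 domain}. It then splits into two cases according to whether $A\sqcup F$ still fits under the cap. Your Step 2 applies Lemma~\ref{lem:app MAE 3 domain} directly to the shrunken set $Y'$, which makes the paper's Case II computation unnecessary. That is a genuine simplification once the existence of the maximal set is secured.
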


\begin{proof}
  Firstly, we set
  \[
   Z:=\bigcup_{i=1}^{N} \left\{x\in D_i: \R_i(x) > \frac{N}{\varepsilon^2}\right\}.
  \]
  Then $\mu(Z) < \varepsilon^2$ and hence $\mu(X \setminus Z) > 1-\varepsilon^2 \geq \frac{1}{2}$. For $x\in X \setminus Z$, if $x \in D_i$ for some $i$ such that $\tau_i(x) \in X \setminus Z$, then $\R_i(x) \leq \frac{N}{\varepsilon^2}$. By symmetry, we also have $\R_i(x) \geq \frac{\varepsilon^2}{N}$.

  Take $F$ to be a maximal subset in $X \setminus Z$ satisfying both $\mu(F) \leq \frac{1}{2}\mu(X \setminus Z)$ and $\mu\left(\partial_{LR}^{X\setminus Z}(F)\right) \leq \frac{1}{2} \mu(F)$, whose existence is guaranteed by Zorn's lemma. By Lemma~\ref{lem:app MAE 3 domain}, we have $\mu(F) < 8\varepsilon^2 \mu(X \setminus Z)$. Set $Y:=X \setminus (Z \sqcup F)$. Then
  \begin{equation*}\label{EQ:app domain Y}
   \mu(Y) > (1-8\varepsilon^2)\mu(X \setminus Z) > (1-8\varepsilon^2)(1-\varepsilon^2).
  \end{equation*}

  Now we show that $Y$ satisfies the requirement. 
Given a subset $A \subseteq Y$ with $0< \mu(A) \leq \frac{1}{2}\mu(Y)$, we divide into two cases:

  \textbf{Case I.} $\mu(A) \leq \frac{1}{2}\mu(X \setminus Z) - \mu(F)$. Since $\partial_{LR}^{X \setminus Z}(A \sqcup F) \subseteq \partial_{LR}^{X \setminus Z}(F) \cup \partial_{LR}^{Y}(A)$, we obtain
  \[
   \mu\left(\partial_{LR}^{X \setminus Z}(A \sqcup F)\right) \leq \mu\left( \partial_{LR}^{X \setminus Z}(F) \right) + \mu\left( \partial_{LR}^{Y}(A) \right) \leq \frac{1}{2}\mu(F) + \mu\left( \partial_{LR}^{Y}(A) \right).
  \]
  On the other hand, the maximality of $F$ implies that
  \[
   \mu\left( \partial_{LR}^{X \setminus Z}(A \sqcup F) \right) > \frac{1}{2} \mu(A\sqcup F) = \frac{1}{2} \mu(A) + \frac{1}{2}\mu(F).
  \]
  Combining them together, we obtain that $\mu\left( \partial_{LR}^{Y}(A) \right) > \frac{1}{2}\mu(A)$.

  \textbf{Case II.} $\mu(A) > \frac{1}{2}\mu(X \setminus Z) - \mu(F)$. We have
  \[
   \frac{1}{2} \mu(X \setminus Z) \geq \frac{1}{2} \mu(Y) \geq \mu(A) > \left(\frac{1}{2} - 8\varepsilon^2\right)\mu(X \setminus Z) \geq 8\varepsilon^2\mu(X \setminus Z).
  \]
  Then Lemma~\ref{lem:app MAE 3 domain} implies that $\mu\left(\partial_{LR}^{X \setminus Z}(A)\right) > \frac{1}{2} \mu(A)$. Since
  \[
   \mu\left(\partial_{LR}^{Y}(A)\right) \geq \mu\left(\partial_{LR}^{X \setminus Z}(A)\right) - \mu(F)  > \frac{1}{2}\mu(A) - \mu(F)
  \]
  and
  \[
   \mu(A) > \left(\frac{1}{2} - 8\varepsilon^2\right)\mu(X \setminus Z) > \frac{2}{5}\mu(X \setminus Z),
  \]
  we obtain
  \[
   \mu\left(\partial_{LR}^{Y}(A)\right) > \frac{1}{2}\mu(A) - \mu(F) > \frac{1}{2}\mu(A) - 8\varepsilon^2\mu(X \setminus Z) > \frac{1}{2}\mu(A) - 8\varepsilon^2 \cdot \frac{5}{2}\mu(A) > \frac{1}{4} \mu(A).
  \]
  Hence we conclude the proof.
\end{proof}

\subsection{Markov operators and finite-propagation approximation}

We use Markov operators (see \cite{Rev75} and also \cite{LVZ23b, LWZ25} for details) and follow the outline of \cite[Section 3]{LWZ25}. With the decomposition (\ref{EQ:app decomposition}) and $Y \subseteq X$ from Lemma~\ref{lem:app domain of expansion}, for $x\in Y$ we set
\begin{equation*}\label{EQ:app Markov auxiliary}
 K_{Y,x}:=\{i=1,\cdots, N: x\in D_i \text{~and~} \tau_i(x) \in Y\} \quad \text{and} \quad \sigma_{Y}(x)=\sum_{i \in K_{Y,x}} \R_i(x)^{\frac{1}{2}}.
\end{equation*}

\begin{lem}\label{lem:app sigma geq 1}
For any $x\in Y$, we have $1 \leq \sigma_Y(x) \leq \frac{N^{3/2}}{\varepsilon}$.
\end{lem}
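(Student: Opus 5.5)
The upper bound follows from Lemma~\ref{lem:app domain of expansion}, and the lower bound needs one good index in $K_{Y,x}$.

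\textbf{Upper bound.} Fix $x \in Y$. For each $i \in K_{Y,x}$ we have $x \in D_i \cap Y$ and $\tau_i(x) \in Y$, that is, $x \in D_i \cap Y \cap \tau_i^{-1}(Y)$. So the last assertion of Lemma~\ref{lem:app domain of expansion} gives $\R_i(x) \leq \frac{N}{\varepsilon^2}$, hence $\R_i(x)^{1/2} \leq \frac{N^{1/2}}{\varepsilon}$. Since $|K_{Y,x}| \leq N$, summing over $K_{Y,x}$ yields
\[
\sigma_Y(x) \leq N \cdot \frac{N^{1/2}}{\varepsilon} = \frac{N^{3/2}}{\varepsilon}.
\]

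\textbf{Lower bound.} It suffices to find $i \in K_{Y,x}$ with $\tau_i(x) = x$, because then $\R_i(x) = \mu(x)/\mu(x) = 1$ and every term of the sum is positive. The diagonal pair $(x,x)$ satisfies $d(x,x) = 0 \leq LR$, so by the decomposition \eqref{EQ:app decomposition} it lies in some partial translation $K_i$. That means $x \in D_i$ and $\tau_i(x) = x \in Y$, so $i \in K_{Y,x}$ and $\R_i(x) = 1$. Therefore $\sigma_Y(x) \geq 1$.

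\textbf{Expected obstacle.} The only delicate point is that the decomposition \eqref{EQ:app decomposition} genuinely covers the diagonal, which it does since it exhausts the whole set $\{d \leq LR\}$. I also use that $\mu(x) > 0$ for every $x$, which holds by the full-support reduction. With these checks the argument is direct.
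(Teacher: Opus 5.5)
Your proof is correct and follows the paper's argument: the lower bound comes from a partial translation $K_i$ containing the diagonal pair $(x,x)$, so that $\R_i(x)=1$, and the upper bound comes from the final assertion of Lemma~\ref{lem:app domain of expansion} applied to the at most $N$ indices in $K_{Y,x}$. Where the paper writes the sum over all $i=1,\dots,N$, you restrict it to $K_{Y,x}$, which is the more careful way to invoke that lemma, since its ratio bound is only guaranteed when $\tau_i(x)\in Y$.
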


\begin{proof}
  Since $\bigsqcup_{i=1}^N K_i$ covers the diagonal in $X \times X$, there exists $i \in \{1,\cdots, N\}$ such that $(x,x) \in K_i$ for any $x\in Y$. Hence $\sigma_Y(x) \geq \R_i(x)^{\frac{1}{2}} =1$. On the other hand, the final assertion of Lemma~\ref{lem:app domain of expansion} implies
  \[
    \sigma_Y(x) \leq \sum_{i=1}^N \sqrt{\R_i(x)} \leq N\sqrt{\frac{N}{\varepsilon^2}} = \frac{N^{3/2}}{\varepsilon}.
  \]
  Hence we conclude the proof.
\end{proof}

Consider the \emph{normalised Markov kernel}\footnote{Usually the Markov kernel is defined to be a map from $Y \times \mathcal{S}$ to $[0,1]$, where $\mathcal{S}$ is a $\sigma$-algebra on $Y$. In the setting above, we can equivalently define the kernel as a function on $Y \times Y$.} $\Pi_Y: Y \times Y \to [0,1]$ defined by
\begin{equation*}\label{EQ:Markov for K}
 \Pi_{Y}(x,y)=\frac{1}{\sigma_{Y}(x)}\sum_{i\in K_{Y,x}} \R_i(x)^{\frac{1}{2}}\delta_{\tau_{i}(x)}(y) \quad \text{for} \quad  x, y\in Y.
\end{equation*}
Here $\delta_y$ is the Dirac delta measure on $y$. Define the measure $\tilde{\mu}_Y$ on $Y$ by
\begin{equation*}\label{EQ:reservsing measure}
 \tilde{\mu}_{Y}(x):= \sigma_{Y}(x) \cdot \mu(x) \quad \text{for} \quad x\in Y.
\end{equation*}
The symmetry in (\ref{EQ:app decomposition}) gives:
\begin{equation}\label{EQ:app reversible}
  \Pi_Y(x,y) \tilde{\mu}_Y(x) = \Pi_Y(y,x) \tilde{\mu}_Y(y) \quad \text{for any} \quad x,y\in Y.
\end{equation}
In this case, $\Pi_Y$ is called a \emph{reversible} Markov kernel with \emph{reversing measure} $\tilde{\mu}_Y$.

Recall from \cite[Definition 2.11]{LWZ25} that the Cheeger constant for $\Pi_Y$ on $(Y,\tilde{\mu}_{Y})$ is
\[
 \kappa_Y:=\inf\left\{\frac{|\partial_{\Pi_Y} (A)|_{\tilde{\mu}_{Y}}}{\tilde{\mu}_{Y}(A)}: A\subseteq Y \text{~with~} 0<\tilde{\mu}_{Y}(A)\leq\frac 12 \tilde{\mu}_{Y}(Y)\right\},
\]
where $|\partial_{\Pi_Y} (A)|_{\tilde{\mu}_{Y}} := \sum_{x\in A} \sum_{y\in Y \setminus A} \Pi_Y(x,y) \tilde{\mu}_Y(x)$.

The following result comes from \cite[Lemma 3.17(1)]{LWZ25} (see also \cite[Lemma 3.14]{LVZ23b}). For the reader's convenience, we provide a proof here.

\begin{lem}\label{lem:app lower bound for kappa}
  We have $\kappa_Y \geq \frac{\varepsilon^2}{4N^2}$.
\end{lem}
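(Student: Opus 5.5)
Fix $A \subseteq Y$ with $0<\tilde{\mu}_Y(A)\leq \frac{1}{2}\tilde{\mu}_Y(Y)$. The plan is to bound the Markov boundary $|\partial_{\Pi_Y}(A)|_{\tilde{\mu}_Y}$ from below by the $\mu$-measure of the relative boundary $\partial^Y_{LR}(A)$, and then apply the expansion in Lemma~\ref{lem:app domain of expansion}. The first step is to unwind the definitions:
\[
|\partial_{\Pi_Y}(A)|_{\tilde{\mu}_Y}=\sum_{x\in A}\ \sum_{i\in K_{Y,x},\ \tau_i(x)\in Y\setminus A} \R_i(x)^{1/2}\mu(x)=\sum_{x\in A}\ \sum_{i:\,\tau_i(x)\in Y\setminus A}\sqrt{\mu(x)\mu(\tau_i(x))}.
\]
For each $y\in\partial^Y_{LR}(A)$ there is $x\in A$ with $d(x,y)\le LR$, so by \eqref{EQ:app decomposition} there is some $i$ with $(y,x)\in K_i$. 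Since $K_i^{-1}=K_j$ for some $j$, we get $x\in D_j$ and $\tau_j(x)=y$, and $j\in K_{Y,x}$ because $y\in Y$. Choosing one such pair for each $y$ gives an injection, because $\tau_j(x)=y$ determines $y$. Hence
\[
|\partial_{\Pi_Y}(A)|_{\tilde{\mu}_Y}\geq \sum_{y\in\partial^Y_{LR}(A)}\sqrt{\mu(x_y)\mu(y)}.
\]

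Next, both $x_y$ and $y=\tau_j(x_y)$ lie in $Y$, so the ratio bound in Lemma~\ref{lem:app domain of expansion} gives $\mu(x_y)\geq \frac{\varepsilon^2}{N}\mu(y)$. Therefore each term is at least $\frac{\varepsilon}{\sqrt N}\mu(y)$, and
\[
|\partial_{\Pi_Y}(A)|_{\tilde{\mu}_Y}\ge \frac{\varepsilon}{\sqrt N}\,\mu\bigl(\partial^Y_{LR}(A)\bigr).
\]

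The remaining issue, which I expect to be the main obstacle, is that the normalisation condition on $A$ is stated in terms of $\tilde{\mu}_Y$, whereas Lemma~\ref{lem:app domain of expansion} requires $\mu(A)\leq\frac12\mu(Y)$. These measures differ by the factor $\sigma_Y\in[1,N^{3/2}/\varepsilon]$ (Lemma~\ref{lem:app sigma geq 1}), so the condition does not transfer directly. I would split into two cases.

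\textbf{Case 1: $\mu(A)\le\frac12\mu(Y)$.} Lemma~\ref{lem:app domain of expansion} gives $\mu(\partial^Y_{LR}(A))>\frac14\mu(A)$. Combining with the previous bound and $\tilde{\mu}_Y(A)\le \frac{N^{3/2}}{\varepsilon}\mu(A)$ yields
\[
|\partial_{\Pi_Y}(A)|_{\tilde{\mu}_Y}\ge \frac{\varepsilon}{4\sqrt N}\cdot\frac{\varepsilon}{N^{3/2}}\,\tilde{\mu}_Y(A)=\frac{\varepsilon^2}{4N^2}\,\tilde{\mu}_Y(A).
\]

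\textbf{Case 2: $\mu(A)>\frac12\mu(Y)$.} Set $B:=Y\setminus A$, so that $\mu(B)<\frac12\mu(Y)$. The case $\mu(B)=0$ cannot occur: it would force $B=\emptyset$ (full support), and then $\tilde{\mu}_Y(A)=\tilde{\mu}_Y(Y)$, contradicting $\tilde{\mu}_Y(A)\le\frac12\tilde{\mu}_Y(Y)$ with $\tilde{\mu}_Y(Y)>0$. By reversibility \eqref{EQ:app reversible}, the boundary flux of $A$ equals that of $B$. Applying the Case 1 argument to $B$ gives
\[
|\partial_{\Pi_Y}(A)|_{\tilde{\mu}_Y}=|\partial_{\Pi_Y}(B)|_{\tilde{\mu}_Y}\ge\frac{\varepsilon^2}{4N^2}\tilde{\mu}_Y(B)\ge\frac{\varepsilon^2}{4N^2}\tilde{\mu}_Y(A),
\]
where the last inequality uses $\tilde{\mu}_Y(B)=\tilde{\mu}_Y(Y)-\tilde{\mu}_Y(A)\ge\tilde{\mu}_Y(A)$.

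Taking the infimum over all admissible $A$ gives $\kappa_Y\ge\frac{\varepsilon^2}{4N^2}$.
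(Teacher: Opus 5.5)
Your proof is correct and follows essentially the same route as the paper. You rewrite the boundary flux as $\sum_{x\in A}\sum_{i:\,\tau_i(x)\in Y\setminus A}\sqrt{\mu(x)\mu(\tau_i(x))}\geq \frac{\varepsilon}{\sqrt N}\,\mu(\partial^Y_{LR}(A))$, compare $\mu$ with $\tilde{\mu}_Y$ via Lemma~\ref{lem:app sigma geq 1}, and split on whether $\mu(A)\le\frac12\mu(Y)$, using reversibility to pass to $Y\setminus A$ in the second case; your explicit injection $y\mapsto(x_y,j_y)$ and the check that $Y\setminus A\neq\emptyset$ supply details the paper leaves implicit.
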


\begin{proof}
  Given $A\subseteq Y$ with $0<\tilde{\mu}_{Y}(A)\leq\frac 12 \tilde{\mu}_{Y}(Y)$, we have
  \begin{align*}
    |\partial_{\Pi_Y} (A)|_{\tilde{\mu}_{Y}} &= \sum_{x\in A} \sum_{y\in Y \setminus A} \left(\frac{1}{\sigma_{Y}(x)}\sum_{i\in K_{Y,x}} \R_i(x)^{\frac{1}{2}}\delta_{\tau_{i}(x)}(y) \right) \cdot \sigma_{Y}(x) \mu(x) \\
    &= \sum_{x\in A} \sum_{i:\tau_i(x) \in Y\setminus A} \sqrt{\mu(\tau_i(x))\mu(x)} \geq \frac{\varepsilon}{\sqrt{N}} \cdot \mu(\partial_{LR}^Y(A)),
  \end{align*}
  where the last inequality follows from Lemma~\ref{lem:app domain of expansion}. Also Lemma~\ref{lem:app sigma geq 1} shows
  \[
   \mu(A) \leq \tilde{\mu}_Y(A) \leq \frac{N^{3/2}}{\varepsilon} \mu(A).
  \]

  If $\mu(A) \leq \frac{1}{2}\mu(Y)$, Lemma~\ref{lem:app domain of expansion} directly implies
  \[
   \frac{|\partial_{\Pi_Y} (A)|_{\tilde{\mu}_{Y}}}{\tilde{\mu}_{Y}(A)} \geq \frac{\varepsilon^2}{N^2} \cdot \frac{\mu(\partial_{LR}^Y(A))}{\mu(A)} > \frac{\varepsilon^2}{4N^2}.
  \]
  If $\mu(A) > \frac{1}{2}\mu(Y)$, apply the above to $Y \setminus A$ and we obtain
  \[
  |\partial_{\Pi_Y} (A)|_{\tilde{\mu}_{Y}} = |\partial_{\Pi_Y} (Y \setminus A)|_{\tilde{\mu}_{Y}} > \frac{\varepsilon^2}{4N^2} \cdot \tilde{\mu}_Y(Y\setminus A) \geq \frac{\varepsilon^2}{4N^2} \cdot \tilde{\mu}_Y(A),
  \]
  where the first equality follows from (\ref{EQ:app reversible}). Hence we finish the proof.
\end{proof}

Define the associated \emph{Markov operator} $\P \in \B(\ell^2(Y,\tilde{\mu}_{Y}))$ by
\begin{equation*}\label{EQ:Markov operator defn}
 \P f(x):=  \sum_{y\in Y} f(y)\Pi_Y(x,y) \quad \text{for} \quad f\in \ell^2(Y,\tilde{\mu}_{Y}) \quad \text{and} \quad x\in Y.
\end{equation*}
By (\ref{EQ:app reversible}), $\P$ is a bounded self-adjoint operator with spectrum contained in $[-1,1]$. Denote $\ell^2_0(Y,\tilde{\mu}_{Y})$ the orthogonal complement of constant functions in $\ell^2(Y,\tilde{\mu}_{Y})$. It follows from \cite[Theorem 2.1]{LS88} together with Lemma~\ref{lem:app lower bound for kappa} that the supremum $\lambda$ of the spectrum of $\P|_{\ell^2_0(Y,\tilde{\mu}_{Y})}$ satisfies:
\begin{equation}\label{EQ:spectral gap control}
1- \lambda \geq \frac{\kappa_Y^2}{2} \geq \frac{\varepsilon^4}{32N^4}.
\end{equation}

Consider the embedding $I: \ell^2(Y,\tilde{\mu}_Y) \longrightarrow \ell^2(X,\mu)$ simply by extending each function in $\ell^2(Y,\tilde{\mu}_Y)$ by zero on $X \setminus Y$. Since $\tilde{\mu}_Y$ is equivalent to $\mu_Y$ and $\tilde{\mu}_Y \geq \mu_Y$ by Lemma~\ref{lem:app sigma geq 1}, this is well-defined and $\|I\| \leq 1$.
Denote the map
\[
 \Ad_I: \B(\ell^2(Y,\tilde{\mu}_Y)) \longrightarrow \B(\ell^2(X,\mu)), \quad T \mapsto I \circ T \circ I^* \quad \text{for} \quad T \in \B(\ell^2(Y,\tilde{\mu}_Y)).
\]
Define $P_Y \in \B(\ell^2(X,\mu))$ by $P_Y(\xi):=\frac{\langle \xi, \chi_Y \rangle}{\mu(Y)}\chi_Y$ for $\xi \in \ell^2(X,\mu)$, which is the orthogonal projection in $\B(\ell^2(X,\mu))$ onto $\CC\chi_Y$. Denote by $\tilde{P}_Y \in \B(\ell^2(Y,\tilde{\mu}_Y))$ the orthogonal projection onto constant functions on $Y$ in $\ell^2(Y,\tilde{\mu}_Y)$. Direct calculations show that
\begin{equation}\label{EQ:relation between P and tildeP}
 \Ad_I(\tilde{P}_Y) = \frac{\mu(Y)}{\tilde{\mu}_Y(Y)} \cdot P_Y.
\end{equation}

Recall from (\ref{EQ:spectral gap control}) that $\frac{1}{2} \chi_{Y} + \frac{1}{2}\P$ has spectrum contained in $[0,1-\frac{\varepsilon^4}{64N^4}]\cup\{1\}$ Therefore, for any $k \in \NN$ we have
\[
\left\|\left(\frac{1}{2}\chi_{Y}+\frac{1}{2}\P\right)^k-\tilde{P}_Y\right\| \leq \left(1-\frac{\varepsilon^4}{64N^4}\right)^k.
\]
Applying $\Ad_I$ and using (\ref{EQ:relation between P and tildeP}), we obtain
\begin{small}
\begin{align}\label{EQ: app for proj}
\left\|\frac{\tilde{\mu}_Y(Y)}{\mu(Y)} \cdot\Ad_I\left[\left(\frac{1}{2}\chi_{Y}+\frac{1}{2}\P\right)^k\right]- P_Y\right\| \leq \frac{\tilde{\mu}_Y(Y)}{\mu(Y)} \cdot \left(1-\frac{\varepsilon^4}{64N^4}\right)^k
\leq \frac{N^{3/2}}{\varepsilon} \cdot \left(1-\frac{\varepsilon^4}{64N^4}\right)^k,
\end{align}
\end{small}
where the second inequality comes from Lemma~\ref{lem:app sigma geq 1}.
Choose $K \in \NN$ such that
\begin{equation}\label{EQ:estimate in pf of main thm}
\left(1-\frac{\varepsilon^4}{64N^4}\right)^K \leq \frac{\varepsilon^2}{N^{3/2}}.
\end{equation}
Also direct calculations show that
\[
\|P_Y - P\| \leq \sqrt{\mu(X \setminus Y)}  = \sqrt{1-(1-8\varepsilon^2)(1-\varepsilon^2)} \leq 3\varepsilon.
\]
Combining them together, we obtain
\begin{align*}\label{EQ:estimate in pf of main thm3}
\Big\|\frac{\tilde{\mu}_Y(Y)}{\mu(Y)}  \cdot\Ad_I\left[\left(\frac{1}{2}\chi_{Y}+\frac{1}{2}\P\right)^K\right]- P\Big\| & \leq \Big\|\frac{\tilde{\mu}_Y(Y)}{\mu(Y)} \cdot\Ad_I\left[\left(\frac{1}{2}\chi_{Y}+\frac{1}{2}\P_Y\right)^K\right]- P_Y\Big\| + \|P_Y - P\| \nonumber \\
& \leq \frac{N^{3/2}}{\varepsilon} \cdot \left(1-\frac{\varepsilon^4}{64N^4}\right)^K + 3\varepsilon \\
& \leq \varepsilon + 3\varepsilon =4 \varepsilon.
\end{align*}

Finally, note that the following operator
\[
\tilde{T}:=\frac{\tilde{\mu}_Y(Y)}{\mu(Y)}  \cdot\Ad_I\left[\left(\frac{1}{2}\chi_{Y}+\frac{1}{2}\P\right)^K\right] \quad \in \quad \B(\ell^2(X,\mu))
\]
has propagation at most $KLR$. Setting $T:=U\tilde{T}U^* \in \B(\ell^2(X))$, we have
\[
\|T - P_u\| = \|U\tilde{T}U^* - UPU^*\| \leq 4\varepsilon
\]
and $T$ has the same propagation as that of $\tilde{T}$.

In conclusion, we proved: taking $L \in \NN$ with $(1+\frac{1}{28})^L \geq \frac{1}{4\varepsilon^2}$, $N:=4N_X(LR)$ and $K$ satisfying (\ref{EQ:estimate in pf of main thm}), there exists $T \in \B(\ell^2(X))$ with propagation at most $KLR$ such that $\|T-P_u\| \leq 4\varepsilon$. Therefore, we take
\[
L:(0,\frac{1}{10}) \longrightarrow (0,+\infty), \quad \varepsilon \mapsto \left\lceil \frac{-\log(4\varepsilon^2)}{\log(29/28)} \right\rceil
\]
and
\[
h:(0,\frac{1}{10}) \times (0,+\infty) \times [1,+\infty) \longrightarrow (0,+\infty) \quad \text{by} \quad  (\varepsilon, R, N) \mapsto K(\varepsilon,N)L(\varepsilon)R,
\]
where
\[
K(\varepsilon,N)
=
\left\lceil
\frac{64N^4}{\varepsilon^4}
\log\frac{N^{3/2}}{\varepsilon^2}
\right\rceil.
\]
Here we use the elementary estimate that $1-t \leq e^{-t}$ for any $t\in (0,1)$ to verify that $K$ works. It is clear from these formulas that $L$ is non-increasing, while $K(\cdot,N)$ is non-increasing and $K(\varepsilon,\cdot)$ is non-decreasing. Hence $h$ is non-increasing in its first variable and non-decreasing in its third variable. These functions fulfill the task and we complete the proof.

\bibliographystyle{plain}
\bibliography{reference_quantitative}

\end{document}